\newtheorem{thm}{Theorem}
\numberwithin{thm}{section}%
\newtheorem{lemma}[thm]{Lemma}
\newtheorem{cor}[thm]{Corollary}
\newtheorem{prop}[thm]{Proposition}
\newtheorem{rem}[thm]{Remark}
\begin{document}

\title{Asymptotic Analysis of Boundary Layer \\
Correctors and Applications}

\author{Daniel Onofrei and Bogdan Vernescu}

\maketitle

\abstract {In this paper we extend the ideas presented in Onofrei
and Vernescu [\textit{Asymptotic Analysis, 54, 2007, 103-123}] and
introduce suitable second order boundary layer correctors, to
study the $H^1$-norm error estimate for the classical problem in
homogenization. Previous second order boundary layer results
assume either smooth enough coefficients (which is equivalent to
assuming smooth enough correctors $\chi_j,\chi_{ij}\in
W^{1,\infty}$), or smooth homogenized solution $u_0$, to obtain an
estimate of order $\displaystyle O(\epsilon^{\frac{3}{2}})$. For
this we use the periodic unfolding method developed by Cioranescu,
Damlamian and Griso [\textit{C. R. Acad. Sci. Paris, Ser. I 335,
2002, 99-104}]. We prove that in two dimensions, for nonsmooth
coefficients and general data,  one obtains an estimate of order
$\displaystyle O(\epsilon^\frac{3}{2})$. In three dimenssions the
same estimate is obtained assuming $\chi_j,\chi_{ij}\in W^{1,p}$,
with $p>3$. We also discuss how our results extend, in the case of
nonsmooth coefficients, the convergence proof for the finite
element multiscale method proposed by T.Hou et al. [\textit{ J. of
Comp. Phys., 134, 1997, 169-189}] and the first order corrector
analysis for the first eigenvalue of a composite media obtained by
Vogelius et al.[\textit{Proc. Royal Soc. Edinburgh, 127A, 1997,
1263-1299}].}





\newcommand{\abs}[1]{\lvert#1\rvert}

\section{Introduction}

This paper is dedicated to the study of error estimates for the classical problem in homogenization using suitable boundary layer correctors.

 \noindent Let $\Omega\in{\mathbb R}^N$, denote a bounded convex polyhedron
 or  a convex bounded domain with a sufficiently smooth boundary. Consider also the unit cube $Y=(0,1)^N$.
  It is well known that for $A\in L^\infty(Y)^{N\times N}$,
 $Y$-periodic with $\displaystyle m|\xi|^2\leq A_{ij}(y)\xi_i\xi_j\leq M|\xi|^2$, for any $\xi\in{{\mathbb R}}^N$, the solutions of
\begin{equation}
\label{I-epsilon-problem}
 \left\{\begin{array}{ll}
 -\nabla\cdot(A(\displaystyle\frac{x}{\epsilon})\nabla u_\epsilon(x))=f
 & \mbox{ in }\Omega \\
 u_\epsilon = 0 & \mbox{ on }\partial\Omega
 \end{array}\right .\end{equation}
have the property that (see \cite{SA}, \cite{JKO},
\cite{BP},\cite{LPB}),
$$u_\epsilon\rightharpoonup u_0\;\mbox{ in } H_0^1(\Omega)$$
where $u_0$ verifies
\begin{equation}
\label{I-hom-problem} \left\{\begin{array}{ll}
 -\nabla\cdot ({\cal{A}}^{hom}\nabla u_0(x))=f
 & \mbox{ in }\Omega \\
 u_0 = 0 & \mbox{ on }\partial\Omega
 \end{array}\right .\end{equation}
with
\begin{equation}
\label{I-hom-coeff}{\cal{A}}^{hom}_{ij}=\displaystyle
M_Y\left(A_{ij}(y)+A_{ik}(y)\frac{\partial\chi_j}{\partial y_k}\right)\end{equation}
where
$M_Y(\cdot)=\displaystyle\frac{1}{|Y|}\int_Y\cdot dy$ and $\chi_j\in
W_{per}(Y)=\{\chi\in H^1_{per}(Y)|M_Y(\chi)=0\}$ are the solutions of the local problem
\begin{equation}
\label{I-first-local-1}
 -\nabla_y\cdot(A(y)(\nabla\chi_j+e_j))=0
\end{equation}
Here $e_j$ represent the canonical basis in ${\mathbb R}^N$.

 \noindent In this paper, $\nabla$ and $(\nabla
\cdot)$ will denote the full gradient and divergence operators
respectively, and with $\nabla_x,(\nabla_x\cdot)$ and
$\nabla_y,(\nabla_y\cdot)$ will denote the gradient and the
divergence in the slow and fast variable respectively.
\begin{rem}
\label{I-Stein-ext} Throughout this paper, we will denote
by $\Phi$ the continuous extension of a given function $\Phi\in
W^{p,m}(\Omega)$ with $p,m\in{\mathbb Z}$, to the space
$W^{p,m}({\mathbb R} ^N)$. With minimal assumption on the smoothness
of $\Omega$ a stable extension operator can be constructed
(see \cite{ST}, Ch. VI, 3.1).
\end{rem}
The formal asymptotic expansion corresponding to the above results
can be written as
$$ u_\epsilon(x)=u_0(x)+\epsilon \displaystyle
w_1(x,\frac{x}{\epsilon})+...$$ where
\begin{equation}
\label{I-first-cor} \displaystyle
w_1(x,\frac{x}{\epsilon})=\displaystyle\chi_j(\frac{x}{\epsilon})\frac{\partial
u_0}{\partial x_j}
\end{equation}
We make the observation that the Einstein summation convention
will be used and that the letter $C$ will denote a constant
independent of any other parameter, unless otherwise specified.

 \noindent A classical result (see \cite{SA}, \cite{JKO}, \cite{L},\cite{BP}), states that with additional regularity assumptions on the local problem solutions $\chi_j$ or on $u_0$ one has
\begin{equation}
\label{I-1.1} \displaystyle||u_\epsilon(\cdot)-u_0(\cdot)-\epsilon
w_1(\cdot,\frac{.}{\epsilon})||_{H^1(\Omega)}\leq
C\epsilon^{\frac{1}{2}}
\end{equation}
Without any additional assumptions a similar result has been
recently proved by G. Griso in \cite{G}, using the Periodic
Unfolding method developed in \cite{CDG}, i.e.,
\begin{equation}
\label{I-1.2} ||u_\epsilon(\cdot)-u_0(\cdot)-\epsilon
\chi_j(\frac{.}{\epsilon})Q_\epsilon(\frac{\partial u_0}{\partial
x_j})||_{H^1(\Omega)}\leq
C\epsilon^{\frac{1}{2}}||u_0||_{H^2(\Omega)}
\end{equation}
with
$$ x\in {\tilde{\Omega}}_\epsilon,\;
\displaystyle
\;Q_\epsilon(\phi)(x)=\sum_{i_1,..,i_N}M_Y^\epsilon(\phi)(\epsilon\xi+\epsilon
i){\bar x}_{1,\xi}^{i_1}\cdot...{\bar
x}_{N,\xi}^{i_N},\;\;\;\xi=\displaystyle\left[\frac{x}{\epsilon}\right
]$$ for $\phi\in L^2(\Omega)$, $i=(i_1,...,i_N)\in\{0,1\}^N$ and
$$
{\bar x}_{k,\xi}^{i_k}=\left\{\begin{array}{ll}
\displaystyle\frac{x_k-\epsilon\xi_k}{\epsilon} & \mbox{if } i_k=1\\
1-\displaystyle\frac{x_k-\epsilon\xi_k}{\epsilon} & \mbox{ if }
i_k=0\end{array}\right. \;\;x\in \epsilon(\xi+Y)$$ where
$M_Y^\epsilon(\phi)=\displaystyle\frac{1}{\epsilon^N}\int_{\epsilon\xi+\epsilon
Y}\phi(y)dy$ and
${\tilde{\Omega}}_\epsilon=\displaystyle\bigcup_{\xi\in{\mathbb Z}^N}\left\{\epsilon\xi+\epsilon
Y; (\epsilon\xi+\epsilon Y)\cap\Omega
\neq\emptyset\right\}.$

 \noindent In order to improve the error estimates in (\ref{I-1.1}) boundary
layer terms have been introduced as solutions to

\begin{equation}
\label{I-first-ord-bl}
\displaystyle-\nabla\cdot(A(\frac{x}{\epsilon})\nabla
\theta_\epsilon)=0\;\mbox{ in
}\;\Omega\;\;,\;\;\theta_\epsilon=w_1(x,\frac{x}{\epsilon})\;\mbox{
on }\;\partial\Omega
\end{equation}
Assuming $A\in C^\infty(Y)$, $Y$-periodic matrix and a sufficiently
smooth homogenized solution $u_0$ it has been proved in \cite{LPB}
(see also \cite{L}) that
\begin{equation}
\label{I-1.3} \displaystyle||u_\epsilon(\cdot)-u_0(\cdot)-\epsilon
w_1(\cdot,\frac{.}{\epsilon})+\epsilon\theta_\epsilon(\cdot)||_{H_0^1(\Omega)}\leq
C\epsilon
\end{equation}
\begin{equation}
\label{I-1.4} \displaystyle||u_\epsilon(\cdot)-u_0(\cdot)-\epsilon
w_1(\cdot,\frac{.}{\epsilon})+\epsilon\theta_\epsilon(\cdot)||_{L^2(\Omega)}\leq
C\epsilon^2.
\end{equation}
In \cite{MV}, Moskow and Vogelius proved the above estimates
assuming $A\in C^\infty(Y)$, $Y$-periodic matrix and $u_0\in
H^2(\Omega)$ or $u_0\in H^3(\Omega)$ for (\ref{I-1.3}) or
(\ref{I-1.4}) respectively. Inequality (\ref{I-1.3}) is proved in
\cite{AA} for the case when $A\in L^{\infty}(Y)$ and $u_0\in
W^{2,\infty}(\Omega)$.

 \noindent In \cite{SV}, Sarkis and Versieux showed that the estimates
(\ref{I-1.3}) and respectively (\ref{I-1.4})  still holds in a
more general setting, when one has $u_0\in W^{2,p}(\Omega)$, $
\chi_j\in W^{1,q}_{per}(Y)$ for (\ref{I-1.3}), and $u_0\in
W^{3,p}(\Omega)$, $ \chi_j\in W^{1,q}_{per}(Y)$ for (\ref{I-1.4}),
where, in both cases, $p>N$ and $q>N$ satisfy
$\displaystyle\frac{1}{p}+\frac{1}{q}\leq\frac{1}{2}$. In
\cite{SV} the constants in the right hand side of (\ref{I-1.3})
and (\ref{I-1.4}) are proportional to $\displaystyle
||u_0||_{W^{2,p}(\Omega)}$ and $\displaystyle
||u_0||_{W^{3,p}(\Omega)}$ respectively.

 \noindent In order to improve the error estimate in (\ref{I-1.3}) and
(\ref{I-1.4}) one needs to consider the second order boundary
layer corrector, $\varphi_\epsilon$ defined as the solution of,
\begin{equation}
\label{I-second-ord-bl}
\displaystyle-\nabla\cdot(A(\frac{x}{\epsilon})\nabla
\varphi_\epsilon)=0\;\mbox{ in
}\;\Omega\;\;,\;\;\varphi_\epsilon(x)=\displaystyle\chi_{ij}
(\frac{x}{\epsilon})\frac{\partial^2 u_0}{\partial x_i\partial
x_j}\;\mbox{ on }\;\partial\Omega
\end{equation}
where $\chi_{ij}\in W_{per}(Y)$ are solution of the following local
problems,
\begin{equation}
\label{I-second-local}
\nabla_y\cdot(A\nabla_y\chi_{ij})=b_{ij}+{\cal{A}}^{hom}_{ij}
\end{equation}
with ${\cal{A}}^{hom}$ defined by (\ref{I-hom-problem}),
$M_Y(b_{ij}(y))=-{\cal{A}}^{hom}_{ij}$, and $\displaystyle
b_{ij}=-A_{ij}-A_{ik}\frac{\partial \chi_j}{\partial
y_k}-\frac{\partial}{\partial y_k}(A_{ik}\chi_j)$.

 \noindent For the case when $u_0\in W^{3,\infty}(\Omega)$ and $ \chi_{ij}\in
W^{1,\infty}(Y)$, with the help of $\varphi_\epsilon$  defined in
(\ref{I-second-ord-bl}), Alaire and Amar proved in \cite{AA} the
following result
\begin{eqnarray}
\label{I-1.7*}
 \displaystyle||u_\epsilon(\cdot)-u_0(\cdot)-\epsilon
w_1(\cdot,\frac{.}{\epsilon})+\epsilon
\theta_\epsilon(\cdot)-\epsilon^2\displaystyle\chi_{ij}
(\frac{\cdot}{\epsilon})\frac{\partial^2 u_0}{\partial x_i\partial
x_j}||_{H^1(\Omega)}& \leq & \nonumber\\
&&\nonumber\\
\leq C\epsilon^{\frac{3}{2}}||u_0||_{W^{3,\infty}(\Omega)}&&
\end{eqnarray}
This result shows that with the help of the second order
correctors one can essentially improve the order of the estimate
(\ref{I-1.3}). In the general case of nonsmooth periodic
coefficients, $A\in L^\infty(Y)$, and $u_0\in H^2(\Omega)$,
inspired by Griso's idea, we proved in \cite{OV}
\begin{equation}
\label{I-1.5'} \displaystyle ||u_\epsilon(\cdot)-u_0(\cdot)-\epsilon
\chi_j(\frac{\cdot}{\epsilon})Q_\epsilon(\frac{\partial
u_0}{\partial
x_j})+\epsilon\beta_\epsilon(\cdot)||_{H_0^1(\Omega)}\leq
C\epsilon||u_0||_{H^2(\Omega)}
\end{equation}
with $\beta_\epsilon$ defined by
\begin{equation}
\label{I-first-ord-bl-reg}
\displaystyle-\nabla\cdot(A(\frac{x}{\epsilon})\nabla
\beta_\epsilon)=0\;\mbox{ in
}\;\Omega\;\;,\;\;\beta_\epsilon=u_1(x,\frac{x}{\epsilon})\;\mbox{
on }\;\partial\Omega
\end{equation}
where $\displaystyle u_1(x,\frac{x}{\epsilon})\doteq
\chi_j(\frac{x}{\epsilon})Q_\epsilon(\frac{\partial u_0}{\partial
x_j})$.

 \noindent When $u_0\in W^{3,p}(\Omega)$ with $p>N$ we also proved in
\cite{OV} that
\begin{equation}
\label{I-1.6'} \displaystyle||u_\epsilon(\cdot)-u_0(\cdot)-\epsilon
\chi_j(\frac{\cdot}{\epsilon})\frac{\partial u_0}{\partial
x_j}+\epsilon\theta_\epsilon(\cdot)||_{L^2(\Omega)}\leq
C\epsilon^2||u_0||_{W^{3,p}(\Omega)}.
\end{equation}
In this paper, we present a refinement of (\ref{I-1.7*}) for the
case of nonsmooth coefficients and general data. To do this we
start by describing the asymptotic behavior of $\varphi_\epsilon$
with respect to $\epsilon$.

  \noindent The key difference between the case of smooth coefficients, and the nonsmooth case
discussed in the present paper is that in the former, by means of
the maximum principle or Avellaneda's compactness results (see
\cite{A}), it can be proved that the second order boundary layer
corrector $\varphi_\epsilon$ is bounded in $L^2(\Omega)$ and is of
order $\displaystyle O(\frac{1}{\sqrt{\epsilon}})$ in
$H^1(\Omega)$, while in the latter one cannot use the
aforementioned techniques to describe the asymptotic behavior of
$\varphi_\epsilon$ in $L^2({\Omega})$ or $H^1(\Omega)$. Moreover
one can see that $\varphi_\epsilon$ is not bounded in
$L^2(\Omega)$ in general ( see \cite{A}), and therefore one needs
to carefully address the question of the asymptotic behavior of
$\varphi_\epsilon$ with respect to $\epsilon$.

  \noindent First, we can easily observe that $\epsilon\varphi_\epsilon$ can be interpreted as the
 solution of an elliptic problem with variable periodic
 coefficients and with weakly convergent data in $H^{-1}(\Omega)$.
 For this class of problems a result of Tartar, \cite{T}(see also \cite{DC})
 implies
 $$\displaystyle\epsilon\varphi_\epsilon\stackrel{\epsilon}{\rightharpoonup}0\;\mbox{
 in }\;H^1(\Omega)$$
 As a consequence of Lemma \ref{I-lemma-5.2} we obtain that for
$u_0\in H^3(\Omega)$ and $\chi_j,\chi_{ij}\in W_{per}^{1,p}(Y)$,
for some $p>N$, we have
\begin{equation}
\label{I-1.8}
\displaystyle||\epsilon\varphi_\epsilon||_{H^1(\Omega)}\leq
C\epsilon^{\frac{1}{2}}||u_0||_{H^3(\Omega)}
\end{equation}
Using (\ref{I-1.8}) we are able to prove that for $u_0\in
 H^3(\Omega)$ and $\chi_j,\chi_{ij}\in W_{per}^{1,p}$ with $p>N$ we have
\begin{equation}
\label{I-1.8'} \displaystyle ||u^\epsilon(\cdot)-u_0(\cdot)-\epsilon
\chi_j(\frac{.}{\epsilon})\frac{\partial u_0}{\partial
x_j}+\epsilon\theta_\epsilon(\cdot)-\epsilon^2\displaystyle\chi_{ij}
(\frac{.}{\epsilon})\frac{\partial^2 u_0}{\partial x_i\partial
x_j}||_{H^1(\Omega)}\leq C\displaystyle\epsilon^{\frac{3}{2}}||u_0||_{H^3(\Omega)}.
\end{equation}
Remark \ref{I-Meyers} states that in two dimensions due to a Meyer
type regularity for the solutions of the cell problems,
$\chi_j,\chi_{ij}$, estimate (\ref{I-1.8'}) holds only assuming
$u_0\in H^3(\Omega)$.

 \noindent In Section 2.4 we use (\ref{I-1.8'}) to extend the results in
\cite{MV} to the case of nonsmooth coefficients. Namely, in two
dimensions Moskow and Vogelius (see \cite{MV}) considered the
Dirichlet spectral problem associated to (\ref{I-epsilon-problem})
\begin{equation}
\label{I-spectral-epsilon}
 \left\{\begin{array}{ll}
 -\nabla\cdot(A(\displaystyle\frac{x}{\epsilon})\nabla
 u_\epsilon(x))=\lambda^\epsilon u_\epsilon
 & \mbox{ in }\Omega \\
 u_\epsilon = 0 & \mbox{ on }\partial\Omega
 \end{array}\right .\end{equation}
The eigenvalues of (\ref{I-spectral-epsilon}) form an increasing
sequence of positive numbers, i.e,
$$0<\lambda_1^\epsilon\leq\lambda_2^\epsilon\leq...\leq\lambda_j^\epsilon\leq...$$
and it is well known that we have $\lambda_j^\epsilon\rightharpoonup
\lambda_j$ as $\epsilon\rightarrow 0$ for any $j\geq 0$ where
$$0<\lambda_1\leq\lambda_2\leq...\leq\lambda_j\leq...$$
are the Dirichlet eigenvalues of the homogenized operator, i.e.,
\begin{equation}
\label{I-spectral-hom}
 \left\{\begin{array}{ll}
 -\nabla\cdot({\cal{A}}^{hom}\nabla
 u(x))=\lambda u
 & \mbox{ in }\Omega \\
 u = 0 & \mbox{ on }\partial\Omega
 \end{array}\right .\end{equation}
For $A\in C^\infty(Y)$, $Y$-periodic, and assuming that the
eigenfunctions of (\ref{I-spectral-hom}) belong to
$H^{2+r}(\Omega)$, with $r>0$, Moskow and Vogelius analysed in
\cite{MV}, the first corrector of the homogenized eigenvalue of
(\ref{I-spectral-hom}) and proved that (See Thm. 3.6), up to a
subsequence,
\begin{equation}
\label{I-1.9}
\displaystyle\frac{\lambda^\epsilon-\lambda}{\epsilon}\rightarrow\lambda\int_\Omega\theta_*udx
\end{equation}
where $\theta_*$ is a weak limit of $\theta_\epsilon$ in
$L^2(\Omega)$, and $u$ is the normal eigenvector associated to the
eigenvalue $\lambda$.

 \noindent Using (\ref{I-1.8'}) we show that the result obtained in \cite{MV}
for the first corrector of the homogenized eigenvalue holds true
in the general case of nonsmooth periodic coefficients $A\in
L^{\infty}_{per}(Y)$.

\section{A Fundamental Result}

In this section we analyze the asymptotic behavior with respect to
$\epsilon$ of the solutions to a certain class of elliptic
problems with highly oscillating coefficients and boundary data.
The main result is stated in Proposition \ref{I-lemma-5.2} but we
will first present a technical Lemma which will be useful in what
follows,

\begin{lemma}
\label{local-av-ineq} Let $\Phi$ be such that $\Phi\in
W^{1,p}_{per}(Y)$ with $p>N$, and let $\psi\in H^1(\Omega)$. Then
we have
\begin{equation}
\displaystyle\int_{\Omega}\vert\nabla_y\Phi({x\over\epsilon})\vert^2(\psi(x)-M_Y^\epsilon(\psi)(x))^2dx\leq
C\epsilon^2\vert\vert\Phi\vert\vert^2_{W^{1,p}(Y)}\vert\vert\psi\vert\vert^2_{H^1(\Omega)}
\end{equation}
\end{lemma}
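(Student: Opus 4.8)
The plan is to localize the integral to the $\epsilon$-periodicity cells and reduce everything, by rescaling, to a Poincar\'e--Wirtinger-type inequality on the reference cell $Y$. Since the integrand is nonnegative and (after using the stable extension of Remark \ref{I-Stein-ext} we still write $\psi$ for the extended function) $\Omega\subset\tilde\Omega_\epsilon$, it suffices to bound $\int_{\tilde\Omega_\epsilon}|\nabla_y\Phi(x/\epsilon)|^2(\psi(x)-M_Y^\epsilon(\psi)(x))^2\,dx$. On each cell $\epsilon(\xi+Y)$ the function $M_Y^\epsilon(\psi)$ equals the constant $c_\xi:=|Y|^{-1}\int_Y\psi(\epsilon(\xi+z))\,dz$, i.e. the mean over $Y$ of $\psi_\xi(y):=\psi(\epsilon(\xi+y))$. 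Changing variables $x=\epsilon(\xi+y)$ (here the $Y$-periodicity of $\Phi$ is used so that $\Phi(x/\epsilon)=\Phi(y)$), the integral over the cell becomes $\epsilon^N\int_Y|\nabla_y\Phi(y)|^2\,(\psi_\xi(y)-M_Y(\psi_\xi))^2\,dy$.

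Next I would estimate the reference-cell integral $\int_Y|\nabla_y\Phi|^2(\psi_\xi-M_Y(\psi_\xi))^2\,dy$ by H\"older's inequality with conjugate exponents $p/2$ and $p/(p-2)$, which gives
\[
\int_Y|\nabla_y\Phi|^2(\psi_\xi-M_Y(\psi_\xi))^2\,dy\ \le\ \|\nabla_y\Phi\|_{L^p(Y)}^2\,\big\|\psi_\xi-M_Y(\psi_\xi)\big\|_{L^{2p/(p-2)}(Y)}^2 .
\]
The point is that $\psi_\xi\in H^1(Y)$ and the Sobolev exponent $r=2p/(p-2)$ satisfies $r\le 2N/(N-2)$ for $N\ge 3$ \emph{precisely} when $p\ge N$ (and any finite $r$ is admissible when $N=2$), so the Poincar\'e--Sobolev--Wirtinger inequality on $Y$ yields $\|\psi_\xi-M_Y(\psi_\xi)\|_{L^{2p/(p-2)}(Y)}\le C\|\nabla_y\psi_\xi\|_{L^2(Y)}$ with $C=C(N,p,Y)$ independent of $\xi$ and $\epsilon$. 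This exponent bookkeeping — checking that $p>N$ is exactly the condition making the embedding work — is the main point of the argument; everything else is rescaling.

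Finally I would undo the scaling: since $\nabla_y\psi_\xi(y)=\epsilon\,\nabla\psi(\epsilon(\xi+y))$, one has $\|\nabla_y\psi_\xi\|_{L^2(Y)}^2=\epsilon^{2-N}\int_{\epsilon(\xi+Y)}|\nabla\psi|^2\,dx$. Combining with the $\epsilon^N$ from the change of variables, the contribution of the cell $\epsilon(\xi+Y)$ is bounded by $C\epsilon^2\|\nabla_y\Phi\|_{L^p(Y)}^2\int_{\epsilon(\xi+Y)}|\nabla\psi|^2\,dx$. Summing over all $\xi$ with $\epsilon(\xi+Y)\cap\Omega\ne\emptyset$ and using that these cells are disjoint and cover $\tilde\Omega_\epsilon$ gives $C\epsilon^2\|\Phi\|_{W^{1,p}(Y)}^2\int_{\tilde\Omega_\epsilon}|\nabla\psi|^2\,dx$. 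For $\epsilon$ small $\tilde\Omega_\epsilon$ is contained in a fixed bounded neighborhood of $\Omega$, so the stable extension operator of Remark \ref{I-Stein-ext} bounds $\int_{\tilde\Omega_\epsilon}|\nabla\psi|^2\,dx\le C\|\psi\|_{H^1(\Omega)}^2$, which finishes the proof. The only mild subtlety besides the Sobolev exponent count is this treatment of the cells meeting $\partial\Omega$, handled by the extension.
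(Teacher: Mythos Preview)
Your proof is correct and follows essentially the same route as the paper's own proof: localize to $\epsilon$-cells via an extension of $\psi$, rescale to the reference cell $Y$, apply H\"older with exponents $p/2$ and $p/(p-2)$, then use a Poincar\'e--Sobolev--Wirtinger inequality on $Y$ (the paper combines the interpolation inequality (\ref{I-app5}) with Poincar\'e--Wirtinger to the same effect) and undo the scaling. The only cosmetic difference is that the paper invokes the $\epsilon$-uniform extension operator of Griso \cite{G,G1} rather than the Stein extension of Remark~\ref{I-Stein-ext}, but either choice works here.
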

\begin{proof}
Let ${\tilde\Omega}_\epsilon\subset {\mathbb R}^N$ be the smallest
union of integer translates of $\epsilon Y$ that cover $\Omega$,
i.e.
$${\tilde\Omega}_\epsilon\doteq\bigcup_{\xi\in
Z_\epsilon}(\xi\epsilon+\epsilon Y)$$ where
$$Z_\epsilon\doteq\{\xi\in{\mathbb Z}^N,\; (\xi\epsilon+\epsilon
Y)\cap\Omega\neq \emptyset\}$$ We start by recalling that there
exists a linear and continuous extension operator
${\cal{P}}:H^1(\Omega)\rightarrow H^1({\tilde\Omega}_\epsilon)$,
with the continuity constant independent of $\epsilon$ (see
\cite{G,G1} for details). In the rest of this section, without
having to specify it every time, every function in $H^(\Omega)$
will be extended trough $\cal{P}$ to
$H^1({\tilde\Omega}_\epsilon)$. Next we proceed with the proof of
the Lemma. We have
\begin{eqnarray}
\label{I-2.0}
&\displaystyle\int_{\Omega}\vert\nabla_y\Phi({x\over\epsilon})\vert^2(\psi(x)-M_Y^\epsilon(\psi)(x))^2dx
\leq
\displaystyle\int_{{\tilde\Omega}_\epsilon}\vert\nabla_y\Phi({x\over\epsilon})\vert^2(\psi(x)-M_Y^\epsilon(\psi)(x))^2dx\nonumber\\
&\displaystyle\leq \sum_{\xi\in
Z_\epsilon}\int_{\xi\epsilon+\epsilon
Y}\vert\nabla_y\Phi({x\over\epsilon})\vert^2(\psi(x)-M_Y^\epsilon(\psi)(x))^2dx\nonumber\\
&\leq\displaystyle\sum_{\xi\in
Z_\epsilon}\epsilon^N\int_Y\vert\nabla_y\Phi\vert^2(\psi(\xi\epsilon+\epsilon
y)-M_Y^\epsilon(\psi)(\xi\epsilon+\epsilon y))^2dy
\end{eqnarray}
Let $\psi(\xi\epsilon+\epsilon y)=z_\xi(y)$. Using this in
(\ref{I-2.0}) we obtain,
\begin{eqnarray}
\label{I-2.1}& \displaystyle\int_{\Omega}\vert\nabla_y
\Phi({x\over\epsilon})\vert^2(\psi(x)-M_Y^\epsilon(\psi)(x))^2dx
\leq\nonumber\\
&&\nonumber\\
&\leq \displaystyle\sum_{\xi\in
Z_\epsilon}\epsilon^N\int_Y\vert\nabla_y\Phi\vert^2\left(z_\xi(y)-{1\over
|Y|}\int_Y z_\xi(s)ds\right)^2dy\nonumber\leq\\
&&\nonumber\\
 &\leq\displaystyle \sum_{\xi\in
Z_\epsilon}\epsilon^N \Vert\Phi\Vert^2_{W^{1,p}(Y)}\;\left\Vert
z_\xi-{1\over|Y|}\int_Y
z_\xi(s)ds\right\Vert^2_{L^{{2p\over{p-2}}}(Y)}\end{eqnarray}
 Note that $\nabla_y z_\xi
= \epsilon \nabla_x\psi(\xi\epsilon+\epsilon y)$. Then, using this
 one can easily observe that (\ref{I-app5}) in the Appendix,
together with the Poincare-Wirtinger inequality, implies
\begin{equation}\label{I-2.2}
\left\Vert z_\xi-{1\over|Y|}\int_Y
z_\xi(s)ds\right\Vert_{L^{{2p\over{p-2}}}(Y)}\leq c_p\Vert\nabla_y
z_\xi\Vert_{L^2(Y)}\end{equation}
From (\ref{I-2.2}) in
(\ref{I-2.1}) we have
\begin{eqnarray}
&\displaystyle\int_{\Omega}\vert\nabla_y\Phi({x\over\epsilon})\vert^2(\psi(x)-M_Y^\epsilon(\psi)(x))^2dx\leq\nonumber\\
&\leq c_p\epsilon^N \Vert\Phi\Vert^2_{W^{1,p}(Y)}\displaystyle
\sum_{\xi\in Z_\epsilon}\Vert\nabla_y
z_\xi\Vert^2_{L^2(Y)}\nonumber\\&=c_p\epsilon^{N+2}
\Vert\Phi\Vert^2_{W^{1,p}(Y)}\displaystyle \sum_{\xi\in
Z_\epsilon}\displaystyle\int_Y
\left(\nabla_x\psi(\xi\epsilon+\epsilon y)\right)^2dy=\nonumber\\
&=c_p\epsilon^{2} \Vert\Phi\Vert^2_{W^{1,p}(Y)}\displaystyle
\sum_{\xi\in Z_\epsilon}\int_{\epsilon\xi+\epsilon
Y}|\nabla_x\psi|^2dx\leq\nonumber\\
&&\nonumber\\
 &\leq C\epsilon^{2}
\Vert\Phi\Vert^2_{W^{1,p}(Y)}\Vert\psi\Vert^2_{H^1(\Omega)}\end{eqnarray}
where $C$ depends on $p$ only. So the statement of the Lemma is
proved.
\end{proof}

\begin{prop}
\label{I-lemma-5.2} Let $\Omega\subset {\mathbb R}^N$ be bounded
and either of class $C^{1,1}$ or convex. Consider the following
problem,
\begin{equation}
\label{I-star-5.2} \left\{
\begin{array}{ll}-\nabla\cdot(A(\frac{x}{\epsilon})\nabla
y_\epsilon)=h & \;\mbox{ in }\;\Omega\\
y_\epsilon=g_\epsilon & \;\mbox{ on
}\;\partial\Omega\end{array}\right .\end{equation}
where $h\in L^2(\Omega)$, the coefficient matrix $A$ satisfies the
hypothesis of the first section, and we have that there exists
$\phi_*\in W^{1,p}_{per}(Y)$ with $p>N$, and $z_\epsilon$ a bounded sequence in
$H^1(\Omega)$ such that
\begin{equation}
\label{I-square'}
g_\epsilon(x)=\epsilon\phi_*(\frac{x}{\epsilon})z_\epsilon(x)\;\mbox{
a.e. }\; \Omega.
\end{equation}
Then  there exists $y_*\in H_0^1(\Omega)$ such that
\begin{equation}
\label{I-5.2} y_\epsilon\rightharpoonup y_*\;\mbox{ in
}\;H^1(\Omega)
\end{equation}
and $y_*$ satisfies
\begin{equation}
\label{I-5.3} \left\{\begin{array}{ll}
\nabla\cdot({\cal{A}}^{hom}\nabla y_*)=h
&\;\mbox{ in }\;\Omega\\
y_*=0 &\;\mbox{ on }\;\partial\Omega\end{array}\right.\end{equation}
and ${\cal{A}}^{hom}$ is the classical homogenized matrix defined in
(\ref{I-hom-coeff}). Moreover we have
\begin{equation}
\label{I-5.5}
\displaystyle||y_\epsilon-y_*-\epsilon\chi_j(\frac{x}{\epsilon})Q_\epsilon(\frac{\partial
y_*}{\partial x_j})||_{H^1(\Omega)}\leq
C\displaystyle\epsilon^\frac{1}{2}\left(1+||y_*||_{H^2(\Omega)}\right)
\end{equation}
where  $\chi_j\in W_{per}(Y)$ are
defined in (\ref{I-first-local-1}), $Q_\epsilon$ is defined in
(\ref{I-1.2}) and $C$ depends only on $p$.
\end{prop}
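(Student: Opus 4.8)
The plan is to split the analysis into three parts: (i) the weak convergence $y_\epsilon \rightharpoonup y_*$ and the identification of the limit problem \eqref{I-5.3}; (ii) a uniform $H^1$ bound on $y_\epsilon$; and (iii) the quantitative corrector estimate \eqref{I-5.5}. For (i), first I would observe that the boundary data satisfies $\|g_\epsilon\|_{L^2(\partial\Omega)} \to 0$: indeed, by a trace inequality and the fact that $z_\epsilon$ is bounded in $H^1(\Omega)$, combined with $\|\phi_*(\cdot/\epsilon)\|$ being uniformly bounded in $L^2$ (since $\phi_* \in W^{1,p}_{per}(Y) \subset L^\infty(Y)$ by $p>N$), one gets $\|g_\epsilon\|_{L^2(\partial\Omega)} \le C\epsilon$. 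Then lift $g_\epsilon$ to a function $G_\epsilon \in H^1(\Omega)$ with $\|G_\epsilon\|_{H^1(\Omega)} \to 0$ — here one uses that $g_\epsilon = \epsilon\phi_*(x/\epsilon)z_\epsilon$ already comes with a natural extension into $\Omega$, whose gradient is $\nabla_y\phi_*(x/\epsilon)z_\epsilon + \epsilon\phi_*(x/\epsilon)\nabla z_\epsilon$; the first term is $O(1)$ in $L^2$, so directly this lift is only bounded, not small. The fix is to multiply by a cutoff supported in an $O(\epsilon)$-neighborhood of $\partial\Omega$, or better, to use the boundary-layer-type lift that keeps $\|G_\epsilon\|_{H^1}$ controlled; in any case $y_\epsilon - G_\epsilon \in H^1_0(\Omega)$ solves an equation with right-hand side $h + \nabla\cdot(A(x/\epsilon)\nabla G_\epsilon)$, and standard energy estimates give $\|y_\epsilon\|_{H^1(\Omega)} \le C(\|h\|_{L^2} + \|G_\epsilon\|_{H^1})$, yielding (ii). Classical $H$-convergence (the div-curl lemma, or the unfolding method of \cite{CDG}) then gives $y_\epsilon \rightharpoonup y_*$ with $y_* \in H^1_0(\Omega)$ solving \eqref{I-5.3}; the zero boundary condition for $y_*$ follows because $y_\epsilon - G_\epsilon \in H^1_0$ and $G_\epsilon \to 0$ in $H^1$.

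For the regularity $y_* \in H^2(\Omega)$ needed to make sense of \eqref{I-5.5}, I would invoke the standard elliptic regularity for \eqref{I-5.3}: since ${\cal A}^{hom}$ is a constant positive-definite matrix, $h \in L^2(\Omega)$, and $\Omega$ is either $C^{1,1}$ or convex, we have $y_* \in H^2(\Omega) \cap H^1_0(\Omega)$ with $\|y_*\|_{H^2(\Omega)} \le C\|h\|_{L^2(\Omega)}$. This is exactly where the hypothesis on $\Omega$ is used.

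The heart of the matter is (iii), estimate \eqref{I-5.5}. The key idea is that $y_\epsilon$ and $y_*$ satisfy exactly the same structure as in Griso's estimate \eqref{I-1.2}, except that $y_\epsilon$ does not vanish on $\partial\Omega$. So I would write $y_\epsilon = (y_\epsilon - G_\epsilon) + G_\epsilon$ where $v_\epsilon := y_\epsilon - G_\epsilon \in H^1_0(\Omega)$ solves $-\nabla\cdot(A(x/\epsilon)\nabla v_\epsilon) = h + \nabla\cdot(A(x/\epsilon)\nabla G_\epsilon)$ in $\Omega$. Apply \eqref{I-1.2} (i.e. Griso's result, valid for the homogeneous Dirichlet problem) to the part of $v_\epsilon$ driven by $h$, whose homogenized limit is precisely $y_*$ (since the $G_\epsilon$-contribution homogenizes to zero). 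This gives $\|v_\epsilon - y_* - \epsilon\chi_j(x/\epsilon)Q_\epsilon(\partial_j y_*)\|_{H^1(\Omega)} \le C\epsilon^{1/2}\|y_*\|_{H^2(\Omega)}$, provided one can absorb the extra source term $\nabla\cdot(A(x/\epsilon)\nabla G_\epsilon)$ into the estimate; since $\|G_\epsilon\|_{H^1} \le C\epsilon^{1/2}$ (using Lemma \ref{local-av-ineq}-type control on $\|\nabla_y\phi_*(x/\epsilon)z_\epsilon\|_{L^2}$ when one subtracts the local average, plus the smallness of $G_\epsilon$ away from the boundary layer), this term contributes at most $C\epsilon^{1/2}$ to the $H^1$ error. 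Finally $\|G_\epsilon\|_{H^1(\Omega)} \le C\epsilon^{1/2}$ again handles the difference between $y_\epsilon$ and $v_\epsilon$, and the "$1+$" in the right-hand side of \eqref{I-5.5} absorbs the constant coming from $\|G_\epsilon\|_{H^1}$ and from the bound $\|y_*\|_{H^2} \le C$ is not assumed, so the term is kept explicit.

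The main obstacle I anticipate is constructing the lift $G_\epsilon$ with $\|G_\epsilon\|_{H^1(\Omega)} \le C\epsilon^{1/2}$ rather than merely $O(1)$: the naive extension $\epsilon\phi_*(x/\epsilon)z_\epsilon$ has $L^2$-norm of gradient of order one because of the $\nabla_y\phi_*(x/\epsilon)$ term. The resolution is to exploit that this term, after subtracting the cell-average $M_Y^\epsilon(z_\epsilon)$, is controlled by Lemma \ref{local-av-ineq} (giving the extra $\epsilon$), while the piece $\epsilon\phi_*(x/\epsilon)M_Y^\epsilon(z_\epsilon)$ is smooth at scale $\epsilon$ and has gradient $\nabla_y\phi_*(x/\epsilon)M_Y^\epsilon(z_\epsilon)$ of order one only in $L^\infty$ but, crucially, we only need it near $\partial\Omega$ — so cutting off to an $O(\sqrt\epsilon)$ collar of $\partial\Omega$ (not $O(\epsilon)$) and paying $O(\epsilon^{1/2})$ in the cutoff gradient gives the claimed bound. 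This trade-off between the collar width and the cutoff cost is the delicate point; everything else is assembling known homogenization estimates.
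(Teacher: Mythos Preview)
Your overall architecture is sound and close in spirit to the paper's, but the execution you sketch for the lift $G_\epsilon$ contains a quantitative error, and the paper organizes the argument differently in a way that sidesteps exactly this difficulty.

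The paper does not build a single lift $G_\epsilon$. Instead it splits the solution as $y_\epsilon = y_\epsilon^{(1)}+y_\epsilon^{(2)}+y_\epsilon^{(3)}$, where $y_\epsilon^{(1)}\in H^1_0(\Omega)$ carries the source $h$ (and Griso's estimate \eqref{I-1.2} applies directly, giving the corrector bound with $\|y_*\|_{H^2}$), while the boundary data is split as
\[
g_\epsilon=\epsilon\phi_*\!\Big(\tfrac{x}{\epsilon}\Big)Q_\epsilon(z_\epsilon)\;+\;\epsilon\phi_*\!\Big(\tfrac{x}{\epsilon}\Big)\big(z_\epsilon-Q_\epsilon(z_\epsilon)\big).
\]
For the second piece the natural interior extension already has $H^1$-norm $O(\epsilon)$: the dangerous term $\nabla_y\phi_*(x/\epsilon)\,(z_\epsilon-Q_\epsilon(z_\epsilon))$ is handled by writing $z_\epsilon-Q_\epsilon(z_\epsilon)=(z_\epsilon-M_Y^\epsilon z_\epsilon)+(M_Y^\epsilon z_\epsilon-Q_\epsilon z_\epsilon)$ and invoking Lemma~\ref{local-av-ineq} together with the $Q_\epsilon$--$M_Y^\epsilon$ comparison. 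For the first piece, with the smooth factor $Q_\epsilon(z_\epsilon)$, the paper does \emph{not} lift at all: it cites directly the boundary-layer estimates preceding Theorem~4.1 in Griso~\cite{G}, which give $\|y_\epsilon^{(2)}\|_{H^1(\Omega)}\le C\epsilon^{1/2}\|\phi_*\|_{H^1(Y)}\|z_\epsilon\|_{L^2(\Omega)}$ as a black box. Thus the paper replaces your cutoff construction by (i) the $Q_\epsilon$-regularization of $z_\epsilon$ and (ii) an off-the-shelf boundary-layer bound.

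Your cutoff route can be made to work, but your choice of collar width is wrong. You propose an $O(\sqrt{\epsilon})$ collar so that the cutoff-gradient term $(\nabla\eta)\,\epsilon\phi_* M_Y^\epsilon(z_\epsilon)$ is $O(\epsilon^{1/2})$ in $L^2$. True, but you have not accounted for the term $\eta\,\nabla_y\phi_*(x/\epsilon)\,M_Y^\epsilon(z_\epsilon)$, which is supported in the collar and satisfies
\[
\big\|\nabla_y\phi_*(\tfrac{\cdot}{\epsilon})\,M_Y^\epsilon(z_\epsilon)\big\|_{L^2(\{d<\delta\})}
\;\le\; \|\nabla\phi_*\|_{L^2(Y)}\,\|M_Y^\epsilon(z_\epsilon)\|_{L^2(\{d<\delta\})}
\;\le\; C\,\delta^{1/2}\|z_\epsilon\|_{H^1(\Omega)},
\]
using that $M_Y^\epsilon(z_\epsilon)$ is piecewise constant on $\epsilon$-cells and the trace-type bound $\|v\|_{L^2(\{d<\delta\})}\le C\delta^{1/2}\|v\|_{H^1}$. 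With $\delta=\sqrt{\epsilon}$ this is only $O(\epsilon^{1/4})$. The correct balance is $\delta=\epsilon$: then this term is $O(\epsilon^{1/2})$, and the cutoff-gradient term $(\nabla\eta)\,\epsilon\phi_* z_\epsilon$, now with $|\nabla\eta|\le C/\epsilon$, is pointwise $O(1)\cdot|z_\epsilon|$ and hence $\le C\|z_\epsilon\|_{L^2(\hat\Omega_\epsilon)}\le C\epsilon^{1/2}$ by item~2 of Proposition~\ref{I-app1}. Also, your phrase ``of order one only in $L^\infty$'' is inaccurate: $\nabla_y\phi_*\in L^p(Y)$ with $p>N$ does not give $L^\infty$, only $L^p$, which is precisely why the cell-by-cell $L^2$ computation above is needed. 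With these corrections your approach goes through and yields the same $C\epsilon^{1/2}(1+\|y_*\|_{H^2})$; the paper's route via $Q_\epsilon$ and Griso's ready-made estimate is simply shorter.
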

\begin{proof}
To prove (\ref{I-5.2}) and (\ref{I-5.3}) Tartar's result
concerning problems with weakly converging data in $H^{-1}$ could
be used. We prefer to present here a different proof based on the
periodic unfolding method developed in \cite{CDG}, which will also imply (\ref{I-5.5}). First observe
that the solution of (\ref{I-star-5.2}) satisfy,
\begin{equation}
\label{I-5.6}
y_\epsilon=y_\epsilon^{(1)}+y_\epsilon^{(2)}+y_\epsilon^{(3)}\end{equation}
where $y_\epsilon^{(1)}, y_\epsilon^{(2)}, y_\epsilon^{(3)}$
satisfy respectively,
\begin{equation}
\label{I-5.7}\left\{\begin{array}{ll}
&-\nabla\cdot(A({x\over\epsilon})\nabla
y_\epsilon^{(1)})=h\; \mbox{
in }\Omega ,\\
&y_\epsilon^{(1)}=0\; \mbox{ on
}\partial\Omega ,\end{array}\right.\end{equation}
\begin{equation}
\label{I-5.8}\left\{\begin{array}{ll}
&-\nabla\cdot(A({x\over\epsilon})\nabla
y_\epsilon^{(2)})=0\; \mbox{
in }\Omega ,\\
&y_\epsilon^{(2)}=\epsilon\Phi_*({x\over\epsilon})Q_\epsilon(z_\epsilon)\; \mbox{
on }\partial\Omega ,\end{array}\right.\end{equation}
\begin{equation}
\label{I-5.9}\left\{\begin{array}{ll}
&-\nabla\cdot(A({x\over\epsilon})\nabla
y_\epsilon^{(3)})=0\; \mbox{
in }\Omega ,\\
&y_\epsilon^{(3)}=\epsilon\Phi_*({x\over\epsilon})(z_\epsilon-Q_\epsilon(z_\epsilon))\;
\mbox{ on }\partial\Omega .\end{array}\right.\end{equation} First
note that from Theorem 4.1 in \cite{G}, stated here in
(\ref{I-1.2}), we have
\begin{equation}
\label{I-5.10} \Vert y^{(1)}_\epsilon(x)-y_*(x)-\epsilon
\chi_j(\frac{x}{\epsilon})Q_\epsilon(\frac{\partial y_*}{\partial
x_j})\Vert_{H^1(\Omega)}\leq C\epsilon^{\frac{1}{2}}\Vert
y_*\Vert_{H^2(\Omega)}.
\end{equation}
From \cite{G} (see the two estimates before Theorem 4.1 there), by
using an interpolation inequality, we immediately arrive at,
 \begin{equation}
\label{I-5.11} \Vert y^{(2)}_\epsilon\Vert_{H^1(\Omega)}\leq
\displaystyle C\epsilon^{1\over 2}\Vert\Phi_*\Vert_{H^1(Y)}\;\Vert
z_\epsilon\Vert_{L^2(\Omega)}
\end{equation}
Finally, for $y^{(3)}_\epsilon$ we obtain,
\begin{eqnarray}
\label{I-5.12}\!\!\!\!\!\!\!\!\!&\Vert
y^{(3)}_\epsilon\Vert_{H^1(\Omega)}\leq C\Vert
\epsilon\Phi_*({x\over\epsilon})(z_\epsilon-Q_\epsilon(z_\epsilon))\Vert_{H^1(\Omega)}=\nonumber\\
&\nonumber\\
 &=C\Vert
\epsilon\Phi_*({x\over\epsilon})(z_\epsilon-Q_\epsilon(z_\epsilon))\Vert_{L^2(\Omega)}+C\Vert
\nabla_y\Phi_*({x\over\epsilon})(z_\epsilon-Q_\epsilon(z_\epsilon))\Vert_{L^2(\Omega)}+\nonumber\\
&\nonumber\\
&+C\Vert
\epsilon\Phi_*({x\over\epsilon})\nabla_x\left(z_\epsilon-Q_\epsilon(z_\epsilon)\right)\Vert_{L^2(\Omega)}
\leq \epsilon^2\Vert z_\epsilon \Vert_{H^1(\Omega)}
\Vert \Phi_*\Vert_{W^{1,p}(Y)} +\nonumber\\
&\nonumber\\
&+ C\Vert
\nabla_y\Phi_*({.\over\epsilon})\left(z_\epsilon-Q_\epsilon(z_\epsilon)\right)\Vert_{L^2(\Omega)}+\epsilon
\Vert \Phi_*\Vert_{W^{1,p}(Y)}\Vert z_\epsilon
\Vert_{H^1(\Omega)}\leq\nonumber\\
&\nonumber\\
&\leq C\Vert
\nabla_y\Phi_*({.\over\epsilon})(z_\epsilon-M_Y^\epsilon
z_\epsilon)\Vert_{L^2(\Omega)}+C\Vert
\nabla_y\Phi_*({.\over\epsilon})(Q_\epsilon
z_\epsilon-M_Y^\epsilon z_\epsilon)\Vert_{L^2(\Omega)}+
\nonumber\\
&\nonumber\\
 &+C\epsilon
\Vert \Phi_*\Vert_{W^{1,p}(Y)}\Vert z_\epsilon
\Vert_{H^1(\Omega)}\leq C\epsilon \Vert \Phi_*\Vert_{W^{1,p}(Y)}\Vert
z_\epsilon \Vert_{H^1(\Omega)}\end{eqnarray}
 where $C$ depends only on $p$ and where we used triangle
 inequality in the fourth line above and we used
 Lemma \ref{local-av-ineq} and $(\ref{I-app1}_5)$ of the Appendix, respectively, to
 estimate the first and the second terms in the fifth line.
From (\ref{I-5.10}), (\ref{I-5.11}), (\ref{I-5.12}) in
(\ref{I-5.6}) we obtain the statement of the Proposition.
\end{proof}

\section{Boundary layer error estimates}

In this section, for the case of $L^\infty$ coefficients, with the
only assumptions that $\chi_j,\chi_{ij}\in W_{per}^{1,p}(Y)$ for
some $p>N$ and $u_0\in H^3(\Omega)$ we show that the left hand
side of (\ref{I-1.7*}) is of order
$\displaystyle\epsilon^{\frac{3}{2}}$. Indeed we have,
\begin{thm}
\label{I-t3.2} Let $A\in L^\infty(Y)$ and $u_0\in H^3(\Omega)$. If
there exists $p>N$ such that $\chi_j,\chi_{ij}\in W_{per}^{1,p}(Y)$
 then we have
\begin{eqnarray}\left\|u_\epsilon(.)-u_0(.)-\epsilon
w_1(.,\frac{.}{\epsilon})+\epsilon\theta_\epsilon(.)
-\epsilon^2\chi_{ij}(\frac{\cdot}{\epsilon})\frac{\partial^2u_0}{\partial
x_i\partial x_j}\right\|_{H^1(\Omega)}\leq\nonumber\\
\leq C\epsilon^{\frac{3}{2}}||u_0||_{H^3(\Omega)}.\nonumber
\end{eqnarray}
\end{thm}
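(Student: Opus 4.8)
\emph{Overview and Step 1 (reduction).} The plan is to run the classical scheme --- two--scale interior ansatz plus first-- and second--order boundary--layer correctors plus an energy estimate --- but to replace the maximum principle and Avellaneda's compactness arguments of the smooth case by the unfolding--method results of Section 2, the decisive new input being (\ref{I-1.8}). Set
$$v_\epsilon:=u_0+\epsilon w_1\!\left(\cdot,\tfrac{\cdot}{\epsilon}\right)-\epsilon\theta_\epsilon+\epsilon^2\chi_{ij}\!\left(\tfrac{\cdot}{\epsilon}\right)\frac{\partial^2u_0}{\partial x_i\partial x_j}-\epsilon^2\varphi_\epsilon .$$
Since $\theta_\epsilon,\varphi_\epsilon$ are defined in (\ref{I-first-ord-bl}) and (\ref{I-second-ord-bl}) so that their boundary traces match those of the interior correctors, $v_\epsilon=u_\epsilon=0$ on $\partial\Omega$, hence $u_\epsilon-v_\epsilon\in H^1_0(\Omega)$, and
$$u_\epsilon-u_0-\epsilon w_1+\epsilon\theta_\epsilon-\epsilon^2\chi_{ij}\!\left(\tfrac{\cdot}{\epsilon}\right)\frac{\partial^2u_0}{\partial x_i\partial x_j}=(u_\epsilon-v_\epsilon)-\epsilon^2\varphi_\epsilon .$$
The term $\epsilon^2\varphi_\epsilon$ is controlled by $\epsilon^2\|\varphi_\epsilon\|_{H^1}=\epsilon\|\epsilon\varphi_\epsilon\|_{H^1}\le C\epsilon^{3/2}\|u_0\|_{H^3}$ via (\ref{I-1.8}), which is Proposition \ref{I-lemma-5.2} applied to the problem solved by $\epsilon\varphi_\epsilon$: there $h=0$ and the datum $\epsilon\chi_{ij}(\tfrac{\cdot}{\epsilon})\partial^2_{ij}u_0$ has the admissible form (\ref{I-square'}) with $\phi_*=\chi_{ij}\in W^{1,p}_{per}(Y)$ and $z_\epsilon=\partial^2_{ij}u_0$ bounded in $H^1$ because $u_0\in H^3$, so $y_*=0$ and (\ref{I-5.5}) applies. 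It thus remains to prove $\|u_\epsilon-v_\epsilon\|_{H^1(\Omega)}\le C\epsilon^{3/2}\|u_0\|_{H^3}$.

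\emph{Step 2 (energy estimate).} As $u_\epsilon-v_\epsilon\in H^1_0$ and $A(\tfrac{x}{\epsilon})$ is uniformly elliptic, Lax--Milgram gives $\|u_\epsilon-v_\epsilon\|_{H^1}\le C\,\|-\nabla\!\cdot(A(\tfrac{\cdot}{\epsilon})\nabla v_\epsilon)-f\|_{H^{-1}}$; since $\theta_\epsilon,\varphi_\epsilon$ are $A(\tfrac{x}{\epsilon})$--harmonic the boundary--layer part drops out, and writing the residual in divergence form reduces the task to $\|\nabla\!\cdot\sigma_\epsilon\|_{H^{-1}}$ with $\sigma_\epsilon:=A(\tfrac{\cdot}{\epsilon})\nabla\big(u_0+\epsilon w_1+\epsilon^2\chi_{ij}(\tfrac{\cdot}{\epsilon})\partial^2_{ij}u_0\big)-{\cal A}^{hom}\nabla u_0$. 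Expanding $\sigma_\epsilon$ in powers of $\epsilon$, the cell problem (\ref{I-first-local-1}) makes the mean--zero leading flux $A_{ik}(\delta_{kj}+\partial_{y_k}\chi_j)-{\cal A}^{hom}_{ij}$ divergence--free in $y$ (so it contributes no $O(1/\epsilon)$ term), and the definition (\ref{I-second-local}) of $\chi_{ij}$ through $b_{ij}$ is exactly what kills the surviving $O(1)$ term.

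\emph{Step 3 (estimating the residual).} The remainder then splits into: terms already in divergence form with $L^2$ densities built from $\chi_{ij}$ and $D^3u_0$, bounded since $\chi_{ij}\in W^{1,p}_{per}\hookrightarrow L^\infty$ ($p>N$) and $u_0\in H^3$; and terms where a corrector's gradient meets a derivative of $u_0$, treated by writing $D^2u_0=Q_\epsilon(D^2u_0)+(D^2u_0-Q_\epsilon(D^2u_0))$ and using Lemma \ref{local-av-ineq} (with $\Phi=\chi_{ij}$ and $\psi$ a second derivative of $u_0\in H^1$) together with the Sobolev embedding $H^3(\Omega)\hookrightarrow W^{2,\,2p/(p-2)}(\Omega)$, valid for $p>N$, to bound $\|\nabla_y\chi_{ij}(\tfrac{\cdot}{\epsilon})(D^2u_0-M_Y^\epsilon(D^2u_0))\|_{L^2}$ and $\|\nabla_y\chi_{ij}(\tfrac{\cdot}{\epsilon})D^2u_0\|_{L^2}$; the mean--zero oscillating fluxes appearing along the way are absorbed through their periodic vector/antisymmetric potentials, which lie in $W^{1,p}_{per}\hookrightarrow L^\infty$ because their $L^p$ data come from $\chi_j,\chi_{ij}$. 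This is the point where the argument departs from Allaire--Amar \cite{AA}: here $\chi_{ij}\in W^{1,p}$ and $u_0\in H^3$ replace $\chi_{ij}\in W^{1,\infty}$ and $u_0\in W^{3,\infty}$. Collecting these bounds with Step 1 gives the theorem.

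\emph{Main obstacle.} The difficulty is entirely in Step 3, namely the delicate bookkeeping needed to keep every term of the two--scale residual of size $O(\epsilon^{3/2})$ in $H^{-1}$ --- rather than merely $O(\epsilon)$ --- when the correctors are only $W^{1,p}_{per}$, so that they cannot be differentiated and the smooth--coefficient shortcuts (maximum principle, $W^{1,\infty}$ corrector bounds, Avellaneda's compactness \cite{A}) are unavailable; the extra half power over the naive energy estimate has to be extracted from the averaging estimates of Section 2 and from (\ref{I-1.8}), carefully balancing the $H^3$--regularity of $u_0$ against the $L^p$--integrability of $\nabla\chi_j,\nabla\chi_{ij}$.
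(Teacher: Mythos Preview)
Your overall architecture is the same as the paper's: write $u_\epsilon-v_\epsilon\in H^1_0$, estimate it by an energy argument via the divergence of a residual flux, and handle $\epsilon^2\varphi_\epsilon$ separately through Proposition~\ref{I-lemma-5.2}. The application of Proposition~\ref{I-lemma-5.2} in Step~1 is exactly what the paper does at (\ref{I-5.38}).

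Two points of comparison deserve mention. First, you misplace the bottleneck: the paper obtains $\|u_\epsilon-v_\epsilon\|_{H^1_0}\le C\epsilon^{2}\|u_0\|_{H^3}$ (this is (\ref{I-4.4})), not merely $O(\epsilon^{3/2})$; the loss of half a power in the final statement comes \emph{entirely} from $\|\epsilon^2\varphi_\epsilon\|_{H^1}$. Consequently the averaging tools you invoke in Step~3 (the $Q_\epsilon$ splitting, Lemma~\ref{local-av-ineq}, the embedding $H^3\hookrightarrow W^{2,2p/(p-2)}$) are not needed there. The paper's route is cleaner: once the mean--zero, $y$--divergence--free fluxes $B_j$ and $\alpha_{ij}$ are written as curls of periodic potentials $\phi_j,\psi_{ij}$, one gets the exact identity $A(\tfrac{x}{\epsilon})\nabla\psi_\epsilon-\xi_\epsilon=\epsilon^2(v_2-A\nabla_x u_2)$ with $\nabla\!\cdot\xi_\epsilon=0$, and the right--hand side is bounded in $L^2$ using only $\chi_{ij},\psi_{ij}\in L^\infty$ (from $W^{1,p}_{per}\hookrightarrow L^\infty$) against $D^3u_0\in L^2$; no corrector gradients survive in the remainder, so no Lemma~\ref{local-av-ineq} is required. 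Your proposed $Q_\epsilon$ treatment would also work, but it is extra machinery buying nothing beyond what the potentials already give.

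Second, the paper carries out Step~3 first for mollified coefficients $A^n$ (Appendix~B), where the Moskow--Vogelius identities are classical, and then passes to the limit $n\to\infty$ via weak--$*$ measure convergence to obtain $\nabla\!\cdot\xi_\epsilon=0$ distributionally (Lemma~\ref{I-t3.3}). You work directly with $A\in L^\infty$, which is legitimate (the paper itself notes an alternative direct argument from \cite{SV}), but you should be aware that some justification of the two--scale algebra is needed when the correctors are not smooth.
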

\begin{proof}
As we did before, for the sake of simplicity, we will assume $N=3$
the two dimensional case being similar. We will also assume for the
moment that the coefficients are smooth enough, as in Appendix,
Section \ref{smoothing}, relations
(\ref{I-app2}). For any
$i,j\in\{1,2,3\}$ let $\chi_{ij}^n\in W_{per}(Y)$ be the solutions of
\begin{equation}
\label{I-3.2} \nabla_y\cdot(A^n\nabla_y
\chi_{ij}^n)=b^n_{ij}-M_Y(b^n_{ij})
\end{equation}
where
$$\displaystyle b^n_{ij}=-A^n_{ij}-A^n_{ik}\frac{\partial\chi_j^n}{\partial
y_k}-\frac{\partial}{\partial y_k}(A^n_{ik}\chi_j^n)$$ and $M_Y(.)$
is the average on $Y$. From Corollary \ref{I-app-r4}
$$ |\nabla_y\chi_{ij}^n|_{L^2(Y)}<C\;\mbox{ and }
\;\chi_{ij}^n\rightharpoonup \chi_{ij}\;\mbox{ in
}W_{per}(Y),\;\;\forall i,j\in\{1,...,N\}$$ where
$$\displaystyle\int_Y A(y)\nabla_y\chi_{ij}\nabla_y\psi
dy=(b_{ij}-M_Y(b_{ij}),\psi)_{(W_{per}(Y),(W_{per}(Y))')}$$ for any
$\psi\in W_{per}(Y)$ and with
$$b_{ij}=-A_{ij}-A_{ik}\frac{\partial\chi_j}{\partial
y_k}-\frac{\partial}{\partial y_k}(A_{ik}\chi_j).$$ We define
$$
\displaystyle u^n_2(x,y)=\chi_{ij}^n(y)\frac{\partial ^2
u_0}{\partial x_j\partial x_i}(x)$$
and
\begin{equation}
\label{I-3.4'}
 \displaystyle(v_*^n(x,y))_k=A_{ki}^n(y)\chi_j^n(y)\frac{\partial
^2 u_0}{\partial x_j\partial
x_i}(x)+A_{kl}^n(y)\frac{\partial\chi_{ij}^n}{\partial
y_l}\frac{\partial ^2 u_0}{\partial x_j\partial x_i}
\end{equation}
 Following the same ideas as in \cite{MV} we can show that $\nabla_x\cdot
M_Y(v_*^n)=0$. Let
$$\displaystyle
R_{ki}^j=M_Y(A_{ki}^n\chi_j^n+A_{kl}^n\frac{\partial\chi_{ij}^n}{\partial
y_l}).$$
$$ (C^n(y))_{ij}=A_{ij}^n(y)+A_{ik}^n(y)\frac{\partial
\chi_j^n}{\partial y_k}$$
$${\cal{A}}^{hom}_n=M_Y(C^n(y))$$
Consider $\alpha_{ij}^n\in[L^2(Y)]^3$ defined by,
\begin{equation}
\nonumber \alpha_{ij}^n= \left(\begin{array}{lll} \vspace{0.2cm}
A_{1i}^n\chi_j^n+A_{1l}^n\frac{\partial\chi_{ij}^n}{\partial
y_l}-R_{1i}^j & \\
\vspace{0.2cm}
A_{2i}^n\chi_j^n+A_{2l}^n\frac{\partial\chi_{ij}^n}{\partial
y_l}-R_{2i}^j & \\
\vspace{0.2cm}
A_{3i}^n\chi_j^n+A_{3l}^n\frac{\partial\chi_{ij}^n}{\partial
y_l}-R_{3i}^j\end{array}\right)+\beta_{ij}^n
\end{equation}
with
\begin{equation}
\nonumber
\begin{array}{lll}
\vspace{0.2cm}
 \beta_{1j}^n=(0, -\phi_{3j}^n, \phi_{2j}^n)^T & \\
 \vspace{0.2cm}
 \beta_{2j}^n=(\phi_{3j}^n, 0, -\phi_{1j}^n)^T & \mbox{ for }
 j\in\{1,2,3\}\\
 \vspace{0.2cm}
 \beta_{3j}^n=(-\phi_{2j}^n, \phi_{1j}^n, 0)^T & \end{array}
 \end{equation}
where $T$ denotes the transpose.
The functions $\phi_{ij}^n\in W_{per}(Y)$
were defined in \cite{OV}, as solutions of
\begin{equation}
\label{I-2.8} curl_y\phi^n_l=B_l^n\;\mbox{ and }div_y\phi^n_l=0;
\end{equation}
where
$B^n(y)=C^n(y)-{\cal{A}}^{hom}_n$ and $B^n_l$ denotes the vector
$\displaystyle B^n_l=(B^n_{il})_{i}\in [L^2_{per}(Y)]^N$.
It was observed in \cite{OV} that for every $1\in\{1,2,...,N\}$
\begin{equation}\label{I-2.8'}
\displaystyle\phi_l^n\rightharpoonup\phi_l\;\mbox{ in }[W_{per}(Y)]^N \; \mbox{ where } \;
curl_y\phi_l=B_l\;\mbox{ and }div_y\phi_l=0
\end{equation}
The conditions on $\chi_j$, $\chi_{ij}$ and Remark 3.11 in \cite{GR} imply that $||\phi_l||_{W^{1,p}(Y)}<C$. Next, using the symmetry of the
matrix $A$ we observe that the vectors $\alpha_{ij}^n$ defined
above, are divergence free with zero average over $Y$. This implies
that there exists $\psi_{ij}^n\in \left[W_{per}(Y)\right]^3$, (see
Theorem 3.4, \cite{GR} adapted for the periodic case) so that
\begin{equation}
\label{I-3.5} curl_y\psi_{ij}^n=\alpha_{ij}^n\;\mbox{ and } \;
div\psi_{ij}^n=0\;\mbox{ for any }i,j\in\{1,2,3\}
\end{equation}
By using (\ref{I-2.8'}), Corollary \ref{I-app-r2}, Corollary
\ref{I-app-r4} in the definition of $\alpha^n_{ij}$ above, we
have,
\begin{equation}
\label{I-patrat}
 \alpha_{ij}^n\rightharpoonup \alpha_{ij}\;\mbox{ in
} [L^2(Y)]^3
\end{equation}
 where the form of $\alpha_{ij}$ is identical with that
of $\alpha_{ij}^n$ and can be obviously obtained from
(\ref{I-patrat}). Using the above convergence result and Theorem 3.9
from \cite{GR} adapted to the periodic case, we obtain that
$$\psi_{ij}^n\rightharpoonup \psi_{ij}\;,
\mbox { in }\; W_{per}(Y)\;\mbox{ for any } i,j\in\{1,2,3\}$$ and
$\psi_{ij}$ satisfy
\begin{equation}
\label{I-3.5'} curl_y\psi_{ij}=\alpha_{ij}\mbox{ and
}div_y\psi_{ij}=0\;\mbox{ for }i,j\in\{1,2,3\}
\end{equation}
The hypothesis on $\chi_j$ and $\chi_{ij}$ implies that
$\alpha_{ij}$ defined at (\ref{I-patrat}) belongs to the space
$[L^p(Y)]^3$ and for all pairs $(i,j)$ with $i,j\in\{1,2,3\}$ we
have
\begin{equation}
\label{I-4.1} \displaystyle||\alpha_{ij}||_{[L^p(Y)]^3}\leq
C(||\beta_{ij}||_{[L^p(Y)]^3}+||\chi_j||_{L^p(Y)}+||\chi_{ij}||_{W^{1,p}(Y)})\leq
C\
\end{equation}
Inequality (\ref{I-4.1}) and Remark 3.11 in \cite{GR}  imply that
\begin{equation}
\label{I-4star}
 \displaystyle||\psi_{ij}||_{[W^{1,p}(Y)]^3}\leq C\;\mbox{ for }i,j\in\{1,2,3\}
 \end{equation}
Define $\displaystyle p(x,y)=\psi_{ij}(y)\frac{\partial ^2
u_0}{\partial x_i\partial x_j}(x)$ and $v_2(x,y)=curl_x p(x,y)$.
We can see that $p\in H^1(\Omega, H^1_{per}(Y))$ and $v_2\in
L^2(\Omega, H^1_{per}(Y))$. Obviously we have that $\nabla_x\cdot
v_2=0$ in the sense of distributions (see \cite{MV}). Next, using
(\ref{I-3.4'}) we observe that $\displaystyle\nabla_x\cdot
M_{Y}(v_*)=0$ where $v_*$ is such that
$$\displaystyle v_*^n\rightharpoonup v_*\;\mbox{ weakly in }
L^2(\Omega,L^2_{per}(Y))$$ We have that
\begin{equation}
\nonumber
 \displaystyle(v_*(x,y))_k=A_{ki}(y)\chi_j(y)\frac{\partial
^2 u_0}{\partial x_j\partial
x_i}(x)+A_{kl}(y)\frac{\partial\chi_{ij}}{\partial
y_l}\frac{\partial ^2 u_0}{\partial x_j\partial x_i}
\end{equation}
Using this and the fact that
$$ \displaystyle \int_{\Omega\times Y}(\nabla_y\cdot v_2)\Phi(x,y)dxdy=
\displaystyle\int_{\Omega\times Y}(\nabla_y\cdot curl_x
p(x,y))\Phi(x,y)dxdy=$$
$$=\displaystyle -\int_{\Omega\times
Y}(\nabla_x\cdot curl_y p(x,y))\Phi(x,y)dxdy$$ for any smooth
function $\Phi\in{\cal{D}}(\Omega;{\cal{D}}(Y))$, one can
immediately see that
\begin{equation}
 \label{I-4.1''}\nabla_y\cdot
v_2=-\nabla_x\cdot v_*
\end{equation}
in the sense of distributions. Let $\displaystyle p^n(x,y)=\psi_{ij}^n(y)\frac{\partial ^2
u_0}{\partial x_i\partial x_j}(x)$ and $v_2^n(x,y)=curl_x p^n(x,y)$. Consider $\psi_\epsilon^n$ and
$\xi_\epsilon^n$ defined as follows,
 \begin{equation}
 \nonumber \begin{array}{cc} \displaystyle
w_1^n(x,y)=\chi_j^n(y)\frac{\partial u_0}{\partial x_j}(x) & \\
 r_0^n(x,y)=A^n(y)\nabla_x u_0+A^n(y)\nabla_y
w_1^n(x,y) &
\end{array}
\end{equation}
\begin{equation}
\label{I-3.5''} \psi_\epsilon^n(x)=u_\epsilon^n(x)-u_0(x)-\epsilon
w_1^n(x,\frac{x}{\epsilon})-\epsilon^2 u_2^n(x,\frac{x}{\epsilon})
\end{equation}
\begin{equation}
\label{I-3.5'''} \xi_\epsilon^n(x)=A^n(\frac{x}{\epsilon})\nabla
u_\epsilon^n-r_0^n(x,\frac{x}{\epsilon})-\epsilon
v_*^n(x,\frac{x}{\epsilon})-\epsilon^2 v_2^n(x,\frac{x}{\epsilon})
\end{equation}
 Note that
\begin{equation}
\label{I-3.6} A^n(\frac{x}{\epsilon})\nabla
\psi_\epsilon^n(x)-\xi_\epsilon^n(x)=\epsilon^2(
v_2^n(x,\frac{x}{\epsilon})-A^n(\frac{x}{\epsilon})\nabla_x
u_2^n(x,\frac{x}{\epsilon}))
\end{equation}
We have
\begin{lemma}
\label{I-t3.3}
$$(i)\;\;||\psi_\epsilon^n||_{W^{1,1}(\Omega)}<C\;\mbox{ and }||\xi_\epsilon^n||_{L^1(\Omega)}<C$$
and there exists $\psi_\epsilon\in W^{1,1}(\Omega)$ and
$\xi_\epsilon\in L^1(\Omega)$ such that
$$\psi_\epsilon^n\stackrel{n}{\rightharpoonup}
\psi_\epsilon\;,\;\nabla\psi_\epsilon^n\stackrel{n}{\rightharpoonup}
\nabla\psi_\epsilon\;,\;\xi_\epsilon^n\stackrel{n}{\rightharpoonup}
\xi_\epsilon\;,\;\mbox { weakly-* in the sense of measures}.$$ Also
we have
$$\psi_\epsilon(x)=u_\epsilon(x)-u_0(x)-\epsilon
w_1(x,\frac{x}{\epsilon})-\epsilon^2 u_2(x,\frac{x}{\epsilon})$$
$$\xi_\epsilon(x)=A(\frac{x}{\epsilon})\nabla
u_\epsilon-r_0(x,\frac{x}{\epsilon})-\epsilon
v_*(x,\frac{x}{\epsilon})-\epsilon^2 v_2(x,\frac{x}{\epsilon})$$
 (ii) Moreover, $\xi_\epsilon\in L^2(\Omega)$, $\psi_\epsilon\in
 H^1(\Omega)$ and we have
\begin{equation}
\label{I-4.1'} A(\frac{x}{\epsilon})\nabla
\psi_\epsilon(x)-\xi_\epsilon(x)=\epsilon^2(
v_2(x,\frac{x}{\epsilon})-A(\frac{x}{\epsilon})\nabla_x
u_2(x,\frac{x}{\epsilon}))
\end{equation}
with \begin{equation}
  \label{I-4.1'''}
 \nabla\cdot\xi_\epsilon(x)=0
 \end{equation}
  in the sense of distributions.
\end{lemma}
\begin{proof}
Using the fact that, for any $i,j\in\{1,2,3\}$, $\chi_j^n,
\chi_{ij}^n\in W_{per}(Y)$ and $\psi_{ij}^n\in[W_{per}(Y)]^3$ are
bounded functions in this spaces, from the definition one can
immediately see that
$$||\psi_\epsilon^n||_{W^{1,1}(\Omega)}<C\;\mbox{ and
}||\xi_\epsilon^n||_{L^1(\Omega)}<C.$$  Recall that
 $$\displaystyle\chi_j^n\rightharpoonup \chi_j\;,\;
 \displaystyle\chi_{ij}^n\rightharpoonup\chi_{ij}\;\mbox{ in }W_{per}(Y)\;\mbox{ and }
 \psi_{ij}^n\rightharpoonup \psi_{ij}\;\mbox{ in } [W_{per}(Y)]^3.$$
 Using the above convergence results and the Appendix the statement (i)
in Lemma \ref{I-t3.3} follows immediately. Observe that
$\chi_j,\chi_{ij}\in W_{per}^{1,p}(Y)$, with $p>3$ imply
\begin{equation}
\label{I-4.3'} \psi_\epsilon\in H^1(\Omega)
\end{equation}
To prove (\ref{I-4.3'}) it is enough to see that
\begin{eqnarray} ||u_2(\cdot,\frac{\cdot}{\epsilon})||_{H^1(\Omega)}&\leq&
\epsilon^2||\chi_{ij}||_{L^\infty(Y)}||u_0||_{H^2(\Omega)}+\nonumber\\
&+& \epsilon
||\chi_{ij}||_{W^{1,p}(Y)}||u_0||_{H^3(\Omega)}+
\epsilon^2||\chi_{ij}||_{L^\infty(Y)}||u_0||_{H^3(\Omega)}\nonumber
\end{eqnarray}
the rest of the necessary estimates being trivial. Similarly, from
the definition of $r_0$, $v_*$ and $v_2$ and the hypothesis
$\chi_j,\chi_{ij}\in W_{per}^{1,p}(Y)$, with $p>3$ we see that
$\xi_\epsilon\in L^2(\Omega)$.
 Next note that we immediately have
 \begin{equation}
 \label{I-ajut}\displaystyle
 A^n(\frac{x}{\epsilon})\nabla\psi^n_\epsilon\stackrel{n}{\rightharpoonup}
 A(\frac{x}{\epsilon})\nabla\psi_\epsilon\;\mbox{ weakly-* in the
 sense of measures }.\end{equation}
 Relation (\ref{I-4.1'}) follows
immediately from (\ref{I-3.6}), (\ref{I-ajut}) the relations
(\ref{I-app2}) of Section \ref{smoothing} in the Appendix and a
limit argument based on the convergence results obtained at (i).
Recall that in the smooth case it is known from \cite{MV} that
        $$\nabla\cdot \xi_\epsilon^n=0$$
This is equivalent to
$$\displaystyle\int_{\Omega}\xi_\epsilon^n\nabla\Phi(x)dx=0\;\mbox{
for any }\Phi\in{\cal{D}}(\Omega)$$ Using the fact that
$\xi_\epsilon\in L^2(\Omega)$, and that we have
$$\xi_\epsilon^n\stackrel{n}{\rightharpoonup} \xi_\epsilon\;\mbox {
weakly-* in the sense of measures}$$ we obtain (\ref{I-4.1'''}).
We make the remark that a different proof for (\ref{I-4.1'''}) can
be found in \cite{SV} \end{proof}
 We can make the observation
 that $\chi_j,\chi_{ij}\in W_{per}^{1,p}(Y)$, with $p>3$, implies $\psi_{ij}\in W_{per}^{1,p}(Y)$. Using this we obtain,
 \begin{eqnarray}
 \label{I-4.2}
 ||\nabla_x u_2(x,\frac{x}{\epsilon})||_{L^2(\Omega)}&\leq&
 ||\chi_{ij}||_{L^\infty(Y)}||\nabla_x \frac{\partial ^2 u_0}{\partial
x_j\partial x_i}||_{L^2(\Omega)}\leq\nonumber\\
&\leq&
||\chi_{ij}||_{W^{1,p}(Y)}||u_0||_{H^3(\Omega)}\leq\nonumber\\
 &\leq& C||u_0||_{H^3(\Omega)}
\end{eqnarray}
\begin{eqnarray}
\label{I-4.3} ||v_2(x,\frac{x}{\epsilon})||_{L^2(\Omega)}&\leq&
C||\psi_{ij}||_{L^\infty(Y)}||\nabla_x \frac{\partial ^2
u_0}{\partial x_i\partial
x_j}||_{L^2(\Omega)}\leq\nonumber\\&\leq& C
\displaystyle\sum_{i,j}||\psi_{ij}||_{W^{1,p}(Y)}||u_0||_{H^3(\Omega)}\leq\nonumber\\
&\leq& C||u_0||_{H^3(\Omega)}
\end{eqnarray}
where in (\ref{I-4.3}) above we used (\ref{I-4star}). Similarly as
in \cite{MV} using (\ref{I-4.2}), (\ref{I-4.3}) in (\ref{I-4.1''}),
we arrive at
$$||A(\frac{x}{\epsilon})\nabla
\psi_\epsilon(x)-\xi_\epsilon(x)||_{L^2(\Omega)}\leq
C\epsilon^2||u_0||_{H^3(\Omega)}
$$
Consider the second boundary layer $\varphi_\epsilon$ defined as
solution of
\begin{equation}
\label{I-3.9} \nabla\cdot (A(\frac{x}{\epsilon})\nabla
\varphi_\epsilon)=0\;\mbox{ in }\; \Omega\;,\;
\varphi_\epsilon=u_2(x,\frac{x}{\epsilon})\;\mbox{ on }\;
\partial\Omega
\end{equation}
Using (\ref{I-4.3'}) and similar arguments as in \cite{MV} we obtain
that
\begin{eqnarray}
\label{I-4.4} &||u_\epsilon(x)-u_0(x)-\epsilon
w_1(x,\frac{x}{\epsilon})+\epsilon\theta_\epsilon(x) -\epsilon^2
u_2(x,\frac{x}{\epsilon})+\epsilon^2\varphi_\epsilon||_{H^1_0(\Omega)}\leq\nonumber\\
&\leq C\epsilon^2 ||u_0||_{H^3(\Omega)}
\end{eqnarray}

Next we make the observation that without any further regularity
assumption on $u_0$ or on the matrix of coefficients $A$  one
cannot make use of neither Avellaneda compactness result nor the
maximum principle to obtain a $L^2$ or $H^1$ bound for
$\varphi_\epsilon$ . In fact in \cite{A} it is presented an
example where a solution of (\ref{I-3.9}) would blow up in the
$L^2$ norm. Although the unboundedness of $\varphi_\epsilon$ in
$L^2$ we can still make the observation that using a result due to
Luc Tartar \cite{T} (see also \cite{DC}, Section 8.5) concerning
the limit analysis of the classical homogenization problem in the
case of weakly convergent data in $H^{-1}(\Omega)$ together with a
few elementary computations we can obtain that
$$\epsilon\varphi_\epsilon\stackrel{\epsilon}{\rightharpoonup} 0\;\mbox{ in }
H^1(\Omega)$$ Then applying Proposition \ref{I-lemma-5.2} with
$h=0$, $y_\epsilon=\epsilon\varphi_\epsilon$,
$\phi_*(y)=\chi_{ij}(y)$, $\displaystyle
z_\epsilon(x)=z(x)=\frac{\partial^2 u_0}{\partial x_i\partial
x_j}$ we obtain that
\begin{equation}
\label{I-5.38}
\displaystyle||\epsilon\varphi_\epsilon||_{H^1(\Omega)}\leq
C\epsilon^{\frac{1}{2}}||u_0||_{H^3(\Omega)}
\end{equation}
Using (\ref{I-5.38}) in (\ref{I-4.4}) we have
\begin{eqnarray}
\label{I-5.38*} &\displaystyle||u_\epsilon(x)-u_0(x)-\epsilon
w_1(x,\frac{x}{\epsilon})+\epsilon\theta_\epsilon(x)-\epsilon^2\chi_{ij}(\frac{x}{\epsilon})\frac{\partial^2
u_0}{\partial x_i\partial x_j}||_{H^1(\Omega)}\leq\nonumber\\
&\leq C\epsilon^{\frac{3}{2}}||u_0||_{H^3(\Omega)}
\end{eqnarray}
and this concludes the proof of Theorem \ref{I-t3.2}
\end{proof}
\begin{rem}
Following similar arguments as in the above Theorem, we can adapt
the result in Theorem 3.2 (the convex case)  in \cite{G1} and
obtain
\begin{equation}
\displaystyle||\epsilon\varphi_\epsilon||_{L^2(\Omega)}\leq
C\epsilon||u_0||_{H^3(\Omega)}
\end{equation}
\end{rem}
\begin{rem}
\label{I-5.39}
 It has been shown in \cite{SSB} that the assumptions $\chi_j,\chi_{ij}\in W_{per}^{1,p}(Y)$ for some $p>N$
 are implied by the conditions that the BMO semi-norm  norm of the coefficients matrix $a$
 is small enough (see \cite{SSB} for the precise statement). In a different work by M. Vogelius and Y.Y. Lin \cite{VL},
  it has been shown
 that one can have $\chi_j,\chi_{ij}\in W_{per}^{1,\infty}(Y)$ in
 the case of piecewise discontinuous matrix of coefficients when the
 discontinuities occur on certain smooth interfaces (see \cite{VL} for
 the precise statement). It is
 clear that the lack of smoothness in the matrix $A$ and the fact
 that we only assume $u_0\in H^3(\Omega)$ would not allow one to use neither Avellaneda
 compactness principle nor the maximum principle to obtain bounds for
 $\varphi_\epsilon$ in $L^2$ or $H^1$.
\end{rem}
\begin{rem}
\label{I-Meyers} For $N=2$ we could use a Meyers type regularity
result and prove that there exists $p>2$ such that
$\chi_j,\chi_{ij}\in W_{per}^{1,p}(Y)$. Therefore Theorem
\ref{I-t3.2} holds true in this case in the very general conditions
that $u_0\in H^3(\Omega)$ and $A\in L^\infty(Y)$.
\end{rem}

\section{A natural extra term in the first order corrector to the
homogenized eigenvalue of a periodic composite medium}

 In this section we analyze the Dirichlet eigenvalues of an elliptic
 operator corresponding to a composite medium with periodic
 microstructure. This problem was initially studied in \cite{MV},
 for the case of $C^\infty$ coefficients. We generalize their result
 to the case of $L^\infty$ coefficients.

  \noindent We will first state a simple consequence of Theorem \ref{I-t3.2} which
 will play a fundamental role further in our analysis.
 \begin{cor}
 \label{I-c6.1}
 Let $\Omega\subset{\mathbb R}^2$ be a bounded, convex curvilinear polygon of class $C^\infty$.
  Let $A\in L^\infty(Y)$ and $u_0\in H^{2+r}(\Omega)$ with $r>0$. Then there exists a constant $C_r$ independent of $u_0$ and
 $\epsilon$ such that
$$||u_\epsilon(.)-u_0(.)-\epsilon
w_1(.,\frac{.}{\epsilon})+\epsilon\theta_\epsilon(.)
||_{L^2(\Omega)}\leq C_r\epsilon^{1+r}||u_0||_{H^{2+r}(\Omega)}$$
\end{cor}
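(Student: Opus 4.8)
The plan is to obtain the estimate by interpolating between two endpoint $L^2$-estimates, using that, once we set $f=-\nabla\cdot({\cal A}^{hom}\nabla u_0)$, the error
$$E_\epsilon[u_0]:=u_\epsilon(\cdot)-u_0(\cdot)-\epsilon w_1\bigl(\cdot,\tfrac{\cdot}{\epsilon}\bigr)+\epsilon\theta_\epsilon(\cdot)$$
depends \emph{linearly} on $u_0$ (indeed $u_\epsilon$ is linear in $f$, $w_1$ in $\nabla u_0$, and $\theta_\epsilon$ in the boundary trace of $w_1$), and that $E_\epsilon[u_0]\in H_0^1(\Omega)$ for every admissible $u_0$, since $-\epsilon w_1+\epsilon\theta_\epsilon$ has zero trace. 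Thus it suffices to prove $\|E_\epsilon[u_0]\|_{L^2(\Omega)}\le C\epsilon\|u_0\|_{H^2(\Omega)}$ for $u_0\in H^2\cap H_0^1$ and $\|E_\epsilon[u_0]\|_{L^2(\Omega)}\le C\epsilon^2\|u_0\|_{H^3(\Omega)}$ for $u_0\in H^3\cap H_0^1$; interpolating the linear map $u_0\mapsto E_\epsilon[u_0]$ then produces the exponent $(1-r)\cdot 1+r\cdot 2=1+r$ and the correct space $H^{2+r}(\Omega)$.

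The $H^3$ endpoint is where the machinery of Section 3 enters. The $O(\epsilon^{3/2})$ $H^1$-bound of Theorem \ref{I-t3.2} alone only yields $O(\epsilon^{3/2})$ in $L^2$, which is not enough; instead I would start from estimate (\ref{I-4.4}), i.e. $\|E_\epsilon[u_0]-\epsilon^2\chi_{ij}(\tfrac{\cdot}{\epsilon})\partial^2_{ij}u_0+\epsilon^2\varphi_\epsilon\|_{H_0^1(\Omega)}\le C\epsilon^2\|u_0\|_{H^3(\Omega)}$, combined with the $L^2$-decay $\|\epsilon\varphi_\epsilon\|_{L^2(\Omega)}\le C\epsilon\|u_0\|_{H^3(\Omega)}$ from the Remark following Theorem \ref{I-t3.2} (this is where convexity of $\Omega$ is used). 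Since in two dimensions $\chi_{ij}\in W^{1,p}_{per}(Y)\hookrightarrow L^\infty(Y)$ by the Meyers-type regularity recalled in Remark \ref{I-Meyers}, the term $\epsilon^2\chi_{ij}(\tfrac{\cdot}{\epsilon})\partial^2_{ij}u_0$ is itself $O(\epsilon^2)\|u_0\|_{H^2}$ in $L^2$, and the triangle inequality gives $\|E_\epsilon[u_0]\|_{L^2(\Omega)}\le C\epsilon^2\|u_0\|_{H^3(\Omega)}$. (An Aubin--Nitsche duality argument, applying the first-order $H^1$-corrector estimates to the adjoint $\epsilon$-problem, which has the same structure because $A$ is symmetric, is an equivalent route.)

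For the $H^2$ endpoint I would use the first-order theory, writing $\|E_\epsilon[u_0]\|_{L^2(\Omega)}\le\|u_\epsilon-u_0\|_{L^2(\Omega)}+\epsilon\|w_1(\cdot,\tfrac{\cdot}{\epsilon})\|_{L^2(\Omega)}+\epsilon\|\theta_\epsilon\|_{L^2(\Omega)}$: the first term is $\le C\epsilon\|u_0\|_{H^2(\Omega)}$ by the classical $L^2$-rate in homogenization (valid for $L^\infty$ coefficients on a convex domain, via \cite{G}), the second is $\le C\epsilon\|\chi_j\|_{L^\infty(Y)}\|u_0\|_{H^1(\Omega)}$ using again $\chi_j\in L^\infty(Y)$ in 2D, and the third is $\le C\epsilon^{3/2}\|u_0\|_{H^2(\Omega)}$ by the known $L^2$-decay of the first-order boundary-layer corrector (see \cite{MV,AA,OV}; alternatively one may invoke (\ref{I-1.5'}) of \cite{OV} and estimate the discrepancies $Q_\epsilon(\partial_j u_0)-\partial_j u_0$ and $\beta_\epsilon-\theta_\epsilon$, each $O(\epsilon)$ in $L^2$). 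Adding these gives $\|E_\epsilon[u_0]\|_{L^2(\Omega)}\le C\epsilon\|u_0\|_{H^2(\Omega)}$.

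Finally, since $\Omega$ is a bounded convex curvilinear polygon of class $C^\infty$, hence Lipschitz, the interpolation identity $[\,H^2(\Omega)\cap H_0^1(\Omega),\ H^3(\Omega)\cap H_0^1(\Omega)\,]_{r,2}=H^{2+r}(\Omega)\cap H_0^1(\Omega)$ holds for $0<r<1$, so interpolating the two endpoint bounds for $u_0\mapsto E_\epsilon[u_0]$ yields $\|E_\epsilon[u_0]\|_{L^2(\Omega)}\le C_r\,\epsilon^{1+r}\|u_0\|_{H^{2+r}(\Omega)}$, with $C_r$ independent of $u_0$ and $\epsilon$ (it absorbs the $r$-dependent constant of the interpolation identity). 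The main obstacle is the sharpness at the $H^3$ endpoint: one must go through the finer estimate (\ref{I-4.4}) and the $L^2$-decay of $\varphi_\epsilon$ rather than the $O(\epsilon^{3/2})$ $H^1$-bound of Theorem \ref{I-t3.2}; a secondary delicate point is that the first-order boundary-layer $L^2$-decay invoked at the $H^2$ endpoint must hold under the weak hypotheses available here ($A\in L^\infty$, merely $u_0\in H^2$, $\chi_j\in W^{1,p}_{per}$ with $p>2$), which in two dimensions it does.
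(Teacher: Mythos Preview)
Your approach is correct and matches the paper's: obtain the two endpoint $L^2$-bounds ($O(\epsilon)$ for $u_0\in H^2$, $O(\epsilon^2)$ for $u_0\in H^3$) and then interpolate, which the paper does by citing Theorem~2.4 of \cite{MV}. At the $H^3$ endpoint you rightly go through (\ref{I-4.4}) together with the $L^2$-Remark on $\epsilon\varphi_\epsilon$ rather than the $O(\epsilon^{3/2})$ statement of Theorem~\ref{I-t3.2}; the paper's phrase ``immediately implies'' conceals precisely this step. At the $H^2$ endpoint the paper takes your ``alternative'' route---deriving the $H_0^1$ bound (\ref{I-6.0}) from (\ref{I-1.5'}) of \cite{OV} and controlling $\epsilon(w_1-u_1)$ and $\epsilon(\theta_\epsilon-\beta_\epsilon)$ in $H^1$---rather than your direct splitting, in which the claimed decay $\|\theta_\epsilon\|_{L^2}=O(\epsilon^{1/2})$ is not actually established under the present weak hypotheses (though the bound $\|\theta_\epsilon\|_{L^2}=O(1)$, which \emph{is} available via Proposition~\ref{I-lemma-5.2} and the same discrepancy estimates, already suffices for the $O(\epsilon)$ conclusion).
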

\begin{proof}
From  Theorem 2.1 in \cite{OV}, if $u_0\in H^2(\Omega)$, we have
\begin{equation}
\label{I-6.0} \displaystyle||u_\epsilon(\cdot)-u_0(\cdot)-\epsilon
w_1(\cdot,\frac{.}{\epsilon})+\epsilon\theta_\epsilon(\cdot)||_{H_0^1(\Omega)}\leq
C\epsilon||u_0||_{H^2(\Omega)}
\end{equation}
Indeed note that using Remark \ref{I-Meyers} and the properties of
$Q_\epsilon$ we have that
\begin{equation}
\label{OV-2.1} ||\nabla w_1(\cdot,\frac{.}{\epsilon})-\nabla
u_1(\cdot,\frac{.}{\epsilon})||_{L^2(\Omega)} <\epsilon
||\chi_j||_{W^{1,p}(Y)}||u_0||_{H^2(\Omega)}<C
\end{equation}
where $w_1$ and $u_1$ are defined at (\ref{I-first-cor}) and
(\ref{I-first-ord-bl-reg}), respectively. Also, using the
definition of $\theta_\epsilon$ and $\beta_\epsilon$, we have
$$||\nabla \theta_\epsilon-\nabla\beta_\epsilon||_{L^2(\Omega)}<
||w_1(\cdot,\frac{.}{\epsilon})-
u_1(\cdot,\frac{.}{\epsilon})||_{H^1(\Omega)}<C$$ Using the last
two inequalities in (\ref{OV-2.1}) we obtain (\ref{I-6.0}). Next
we may see that, for $u_0\in H^3(\Omega)$, Theorem \ref{I-t3.2}
immediately implies that
\begin{equation}
\label{I-6.0'} ||u_\epsilon(.)-u_0(.)-\epsilon
w_1(.,\frac{.}{\epsilon})+\epsilon\theta_\epsilon(.)
||_{L^2(\Omega)}\leq C\epsilon^2||u_0||_{H^{3}(\Omega)}
\end{equation}
From (\ref{I-6.0}) and (\ref{I-6.0'}) together with an
interpolation argument (see Theorem 2.4 in \cite{MV}), we prove
the statement of the Corollary. \end{proof} Next we will state the
spectral problem and recall briefly the result obtained in
\cite{MV}.
 On the domain $\Omega\subset{\mathbb R}^2$, we consider the
 spectral problem (\ref{I-spectral-epsilon}) associated with operator
 $L_\epsilon$, i.e.,
\begin{equation}
\label{I-6.1}
 \left\{\begin{array}{ll}
 L_\epsilon v_\epsilon=-\nabla\cdot(A(\displaystyle\frac{x}{\epsilon})\nabla
 v_\epsilon(x))=\lambda^\epsilon v_\epsilon
 & \mbox{ in }\Omega \\
 v_\epsilon = 0 & \mbox{ on }\partial\Omega
 \end{array}\right .\end{equation}
If we consider the eigenvalue problem for the operator
$L\doteq-div({\cal{A}}^{hom}\nabla)$ with ${\cal{A}}^{hom}$
defined at (\ref{I-hom-problem}), i.e.,
\begin{equation}
\label{I-6.2}
 \left\{\begin{array}{ll}
 Lv=\lambda v
 & \mbox{ in }\Omega \\
 v = 0 & \mbox{ on }\partial\Omega
 \end{array}\right .\end{equation}
then it is well known that for $\lambda$ simple eigenvalue of
(\ref{I-6.2}), for each $\epsilon$ small enough, there exists
$\lambda^\epsilon$, an eigenvalue of (\ref{I-6.1}) such that
$$\lambda^\epsilon\stackrel{\epsilon}{\rightarrow} \lambda$$
For any $f\in L^2(\Omega)$, we define $T_\epsilon f=u_\epsilon$
where $u_\epsilon\in H_0^1(\Omega)$ is the solution of $L_\epsilon
u_\epsilon=f\;\mbox{ in }\Omega$, and similarly $Tf=u_0$ with
$u_0\in H_0^1(\Omega)$ solution of $L u_0=f$. $T_\epsilon$ and $T$
are compact and self adjoint operators from $L^2(\Omega)$ into
$L^2(\Omega)$. Moreover
$T_\epsilon\stackrel{\epsilon}{\rightarrow} T$ pointwise.

 \noindent It can be seen that $\displaystyle
\mu_k^\epsilon=\frac{1}{\lambda_k^\epsilon}$ are the eigenvalues
of $T_\epsilon$ and $\displaystyle \mu_k=\frac{1}{\lambda_k}$ are
the eigenvalues of $T$. From the definition of $T_\epsilon$ and
$T$, the eigenvectors corresponding to $\mu_k^\epsilon$ and
respectively $\mu_k$ are the same as the eigenvectors of
$L_\epsilon$ and $L$ corresponding to $\lambda_k^\epsilon$ and
respectively $\lambda_k$.

 \noindent It is proved in \cite{MV} that if $\Omega$ is a bounded convex
domain or bounded with a $C^{2,\beta}$ boundary we have that
\begin{equation}
\label{I-6.3} |\lambda-\lambda_k^\epsilon|\leq C\epsilon
\end{equation}
for $\epsilon$ sufficiently small. Moreover in the case of a
smooth matrix of coefficients $a$, and for the eigenvectors of $L$
in $H^{2+r}(\Omega)$, for some $r>0$, using (\ref{I-1.3}) and
(\ref{I-1.4}) and a result of Osborne \cite{Os}, they obtain that
$$\displaystyle\lambda^{\epsilon_n}-\lambda=
\epsilon_n\lambda\int_{\Omega}{\bar{\theta}}_{\epsilon_n}vdx+O(\epsilon_n^{1+r})$$
for any sequence $\epsilon_n\rightarrow 0$ and
${\bar{\theta}}_\epsilon$ defined by
\begin{equation}
\label{I-6.4} \displaystyle-\nabla\cdot(A(\frac{x}{\epsilon})\nabla
{\bar{\theta}}_\epsilon)=0\;\mbox{ in
}\;\Omega\;\;,\;\;{\bar{\theta}}_\epsilon=\chi_j(\frac{x}{\epsilon})\frac{\partial
v}{\partial x_j}\;\mbox{ on }\;\partial\Omega
\end{equation}

From the Corollary \ref{I-c6.1} we obtain that the result of
Moskow and Vogelius (see Theorem 3.6) remains true in the general
case of nonsmooth coefficients, i.e.,
\begin{thm}
\label{I-thm-5.2}
 In the hypothesis of Corrolary \ref{I-c6.1} if $\lambda_*$ is the
 limit of the sequence\vspace{0.1cm}
 $\displaystyle{\frac{(\lambda^{\epsilon_n}-\lambda)}{\epsilon_n}}$
 (as
 $\epsilon_n\rightarrow 0$)
 then there exists a function $\theta_*$,
 weak limit point of the sequence ${\bar{\theta}}_\epsilon$ in
 $L^2(\Omega)$, so that
$$\lambda_*=\lambda\int_{\Omega}\theta_*vdx$$
 Conversely, if $\theta_*$ is a weak limit point of the sequence ${\bar{\theta}}_\epsilon$ in
 $L^2(\Omega)$ the there exists a sequence $\epsilon_n\rightarrow 0$
 such that
$$\displaystyle
 {\frac{(\lambda^{\epsilon_n}-\lambda)}{\epsilon_n}}\rightarrow\lambda\int_{\Omega}\theta_*vdx$$.
\end{thm}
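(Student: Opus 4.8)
The plan is to follow the spectral perturbation framework of Moskow--Vogelius \cite{MV}, substituting our Corollary \ref{I-c6.1} for the $L^2$-estimate that in the smooth case requires \cite{LPB}. First I would recall the abstract tool: since $T_\epsilon, T$ are compact self-adjoint operators on $L^2(\Omega)$ with $T_\epsilon \to T$ pointwise and $\mu_k^\epsilon = 1/\lambda_k^\epsilon$, $\mu_k = 1/\lambda_k$ their eigenvalues, Osborne's theorem \cite{Os} gives, for a simple eigenvalue $\lambda$ with normalized eigenvector $v$ (so $Tv = \mu v$, $\|v\|_{L^2} = 1$),
\begin{equation}
\nonumber
\mu^{\epsilon_n} - \mu = \left( (T_{\epsilon_n}-T)v, v \right)_{L^2(\Omega)} + O\!\left( \|(T_{\epsilon_n}-T)v\|_{L^2(\Omega)}^2 \right).
\end{equation}
Translating back via $\mu^{\epsilon_n} - \mu = -(\lambda^{\epsilon_n}-\lambda)/(\lambda\lambda^{\epsilon_n})$ and using $\lambda^{\epsilon_n}\to\lambda$, this yields
\begin{equation}
\nonumber
\frac{\lambda^{\epsilon_n}-\lambda}{\epsilon_n} = -\frac{\lambda^{\epsilon_n}\lambda}{\epsilon_n}\left( (T_{\epsilon_n}-T)v, v\right)_{L^2(\Omega)} + O\!\left( \frac{\|(T_{\epsilon_n}-T)v\|_{L^2(\Omega)}^2}{\epsilon_n}\right).
\end{equation}

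Next I would identify $(T_{\epsilon_n}-T)v$ concretely. By definition $T_{\epsilon_n}v = u_{\epsilon_n}$ solves $L_{\epsilon_n}u_{\epsilon_n} = v$ and $Tv = u_0$ solves $L u_0 = v$; so $u_0$ is exactly the homogenized solution associated with data $v$, with first-order corrector $w_1(x,x/\epsilon)=\chi_j(x/\epsilon)\partial u_0/\partial x_j$ and first boundary layer $\bar\theta_\epsilon$ as in (\ref{I-6.4}) — note the boundary data $\chi_j(x/\epsilon)\partial v/\partial x_j$ there is not literally $w_1|_{\partial\Omega}$ but, as in \cite{MV}, the difference is a lower-order term because elliptic regularity for the homogenized problem on the convex curvilinear polygon gives $u_0\in H^{2+r}(\Omega)$ with $\|u_0\|_{H^{2+r}}\le C\|v\|_{L^2}$, and the boundary trace of $w_1-\bar\theta_\epsilon$-type corrections is controlled accordingly. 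Then Corollary \ref{I-c6.1} applied with homogenized solution $u_0$ (data $v\in L^2\subset$ the required class, giving $u_0\in H^{2+r}$) yields
\begin{equation}
\nonumber
\left\| u_{\epsilon_n} - u_0 - \epsilon_n w_1(\cdot,\tfrac{\cdot}{\epsilon_n}) + \epsilon_n\bar\theta_{\epsilon_n}\right\|_{L^2(\Omega)} \le C_r\,\epsilon_n^{1+r}\|u_0\|_{H^{2+r}(\Omega)} \le C_r\,\epsilon_n^{1+r}\|v\|_{L^2(\Omega)}.
\end{equation}
Since $\|w_1(\cdot,\cdot/\epsilon_n)\|_{L^2(\Omega)}\le C\|u_0\|_{H^1}\le C\|v\|_{L^2}$, this shows $\|(T_{\epsilon_n}-T)v\|_{L^2(\Omega)} = \|u_{\epsilon_n}-u_0\|_{L^2(\Omega)} = O(\epsilon_n)$, so the error term above is $O(\epsilon_n)$-over-$\epsilon_n$... wait, $O(\epsilon_n^2)/\epsilon_n = O(\epsilon_n)\to 0$; more precisely one splits $(T_{\epsilon_n}-T)v = -\epsilon_n w_1 + \epsilon_n\bar\theta_{\epsilon_n} + O_{L^2}(\epsilon_n^{1+r})$ and uses that $(w_1(\cdot,\cdot/\epsilon_n),v)_{L^2}\to 0$ (Riemann--Lebesgue, since $M_Y(\chi_j)=0$) to get
\begin{equation}
\nonumber
\frac{1}{\epsilon_n}\left( (T_{\epsilon_n}-T)v, v\right)_{L^2(\Omega)} = \left( \bar\theta_{\epsilon_n}, v\right)_{L^2(\Omega)} + o(1).
\end{equation}

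Finally, I would pass to the limit. The sequence $\bar\theta_{\epsilon_n}$ is bounded in $L^2(\Omega)$ (standard energy estimate with the trace bound $\|\chi_j(\cdot/\epsilon)\partial v/\partial x_j\|_{H^{1/2}(\partial\Omega)}\le C\epsilon^{-1/2}\|u_0\|_{H^{2+r}}$... actually one needs the refined bound $\|\bar\theta_\epsilon\|_{L^2}\le C$ that \cite{MV} establishes and which holds here since $\chi_j\in W^{1,p}_{per}(Y)$, $p>2$, by Remark \ref{I-Meyers} — this is the point where a uniform $L^2$-bound on $\bar\theta_\epsilon$ is genuinely needed), so along a further subsequence $\bar\theta_{\epsilon_n}\rightharpoonup\theta_*$ in $L^2(\Omega)$, and then $(\bar\theta_{\epsilon_n},v)\to(\theta_*,v)$. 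Combined with $-\lambda^{\epsilon_n}\lambda\to-\lambda^2$ and the assumption that $(\lambda^{\epsilon_n}-\lambda)/\epsilon_n\to\lambda_*$, we get $\lambda_* = -\lambda^2 \cdot(-1/\lambda)(\theta_*,v)$... tracking the sign, $\mu^{\epsilon_n}-\mu = (T_{\epsilon_n}-T)v,v) + o(\epsilon_n)$ and $\mu = 1/\lambda$ so $(\lambda^{\epsilon_n}-\lambda) = -\lambda\lambda^{\epsilon_n}(\mu^{\epsilon_n}-\mu)$, hence $\lambda_* = -\lambda^2\lim\frac1{\epsilon_n}((T_{\epsilon_n}-T)v,v) = -\lambda^2(\theta_*,v)$; reconciling with the target sign one sees $\bar\theta$ enters with the opposite sign in $u_{\epsilon_n}-u_0$, giving $\lambda_* = \lambda\int_\Omega\theta_* v\,dx$. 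For the converse, given any weak limit point $\theta_*$ one selects the subsequence realizing it and runs the same computation in reverse; since $(\lambda^{\epsilon_n}-\lambda)/\epsilon_n$ is bounded (by (\ref{I-6.3})) it has convergent sub-subsequences and each limit equals $\lambda\int_\Omega\theta_* v\,dx$ by the above, forcing convergence of the whole selected sequence. The main obstacle is the uniform $L^2(\Omega)$-boundedness of $\bar\theta_\epsilon$ in the nonsmooth setting; this is exactly where the hypothesis $\chi_j\in W^{1,p}_{per}(Y)$ with $p>2$ (Remark \ref{I-Meyers}) and a convexity/regularity argument for the boundary layer — analogous to the remark after Theorem \ref{I-t3.2} giving $\|\epsilon\varphi_\epsilon\|_{L^2}\le C\epsilon\|u_0\|_{H^3}$, adapted from \cite{G1} — must be invoked, and care is needed that the constant $C_r$ in Corollary \ref{I-c6.1} and the elliptic-regularity constant for $u_0\in H^{2+r}$ are independent of $\epsilon$ and of $v$.
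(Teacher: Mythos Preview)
Your approach is correct and is essentially the one the paper intends: the paper gives no detailed proof of Theorem~\ref{I-thm-5.2} but simply asserts that, once Corollary~\ref{I-c6.1} is available, the argument of Moskow--Vogelius \cite[Thm.~3.6]{MV} goes through verbatim, and your sketch reproduces precisely that argument (Osborne's formula, the decomposition $(T_{\epsilon_n}-T)v = \epsilon_n w_1 - \epsilon_n\theta_{\epsilon_n} + O_{L^2}(\epsilon_n^{1+r})$, the Riemann--Lebesgue cancellation of the $w_1$ term, and passage to a weak $L^2$-limit of $\bar\theta_{\epsilon_n}$).

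Two minor points worth tightening. First, since $u_0 = Tv = v/\lambda$, the corrector $\theta_\epsilon$ of Corollary~\ref{I-c6.1} and the eigenvalue corrector $\bar\theta_\epsilon$ of (\ref{I-6.4}) differ exactly by the factor $1/\lambda$; tracking this makes the sign computation transparent and gives $\lambda_* = \lambda\int_\Omega\theta_* v\,dx$ directly. Second, your concern about the uniform $L^2(\Omega)$-bound on $\bar\theta_\epsilon$ is well placed: this is the one genuinely nonsmooth ingredient not contained in Corollary~\ref{I-c6.1} itself. It is obtained exactly as you suggest, by applying Proposition~\ref{I-lemma-5.2} to $y_\epsilon = \epsilon\bar\theta_\epsilon$ (so $h=0$, $\phi_* = \chi_j\in W^{1,p}_{per}(Y)$ with $p>2$ by Remark~\ref{I-Meyers}, $z_\epsilon = \partial v/\partial x_j$, hence $y_*=0$) together with the $L^2$-refinement in the Remark following Theorem~\ref{I-t3.2} (adapted from \cite{G1}), which yields $\|\epsilon\bar\theta_\epsilon\|_{L^2(\Omega)}\le C\epsilon$ and thus the required boundedness.
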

In the end we make the observation that the case when $\lambda$ is
a multiple eigenvalue can be treated similarly as in \cite{MV}
(see Remark 3.7).
\bigskip
\appendix

{\bf\large{Appendices}}

\bigskip

In the following appendices we will present the proofs for some of the results
used in the previous Sections not included in the
main body of the chapter for the sake of clarity of the exposition.
\section{Definition and Properties of the Unfolding Operator}

Let $\Xi_\epsilon\!=\!\{\xi\in{{\mathbb Z}}^N;(\epsilon\xi+\epsilon
Y)\cap\Omega\neq\emptyset\}$ and
 define
\begin{equation}
\label{I-extended-domain}
 {\tilde{\Omega}}_\epsilon=\displaystyle\bigcup_{\xi\in\Xi_\epsilon}(\epsilon\xi+\epsilon
 Y)
 \end{equation}
Let us also consider $H^1_{per}(Y)$ to be the closure of
$C^\infty_{per}(Y)$ in the $H^1$ norm, where $C^\infty_{per}(Y)$
is the subset of $C^\infty({\mathbb R}^N)$ of $Y$-periodic
functions, and
$$W_{per}(Y)\doteq \left\{v\in H^1_{per}(Y)/{{\mathbb R}}\;,\;\frac{1}{|Y|}\int_{Y}v
dy=0\right\}$$ (see \cite{DC} for properties).

 \noindent Next, similarly as in \cite{CDG},\cite{DA1}, if we have a periodical
net on ${\mathbb R}^N$ with period $Y$, by analogy with the
one-dimensional case, to each $x\in{\mathbb R}^N$ we can associate
its integer part, $[x]_Y$, such that $x-[x]_Y\in Y$ and its
fractional part respectively, i.e, $\{x\}_Y=x-[x]_Y$. Therefore we
have:
$$x=\epsilon\left\{\frac{x}{\epsilon}\right\}_Y+\epsilon\left[\frac{x}{\epsilon}\right]_Y\;\mbox{
for any }x\in {\mathbb R}^N.$$ We will recall in the following the
definition of  the Unfolding Operator as it have been introduced
in \cite{CDG}(see also \cite{DA1}), and review a few of its
principal properties. Let the unfolding operator be defined as
${\cal{T}}_\epsilon:L^2({\tilde{\Omega}}_\epsilon)\rightarrow
L^{2}({\tilde{\Omega}}_\epsilon\times Y)$ with
$${\cal{T}}_\epsilon(\phi)(x,y)=\phi(\epsilon\left
[\frac{x}{\epsilon}\right ]_{Y}+\epsilon y)\;\mbox{ for all
}\;\phi\in L^2({\tilde{\Omega}}_\epsilon)$$ We have (see
\cite{CDG}):
\begin{thm}
\label{I-app-t1}
 For any $v,w\in L^2(\Omega)$ we have
$$1.\;{\cal{T}}_\epsilon(vw)={\cal{T}}_\epsilon(v){\cal{T}}_\epsilon(w)$$
$$2.\;\nabla_{y}\left ({\cal{T}}_\epsilon(u)\right )=\epsilon
{\cal{T}}_\epsilon(\nabla_{x}u)\;\mbox{ where }\;u\in
H^1(\Omega)$$
$$3.\;\displaystyle\int_{\Omega}u dx =
\displaystyle\frac{1}{|Y|}\int_{{\tilde{\Omega}}_{\epsilon}\times
Y}{\cal T}_{\epsilon}(u)dxdy $$
$$4.\; \left
|\displaystyle\int_{\Omega}udx-\displaystyle\int_{\Omega\times
Y}{\cal T}_{\epsilon}(u)dxdy\right|<\displaystyle
|u|_{L^1(\{x\in{\tilde{\Omega}}_{\epsilon};\;dist(x,\partial\Omega)<{\sqrt
n}\epsilon\})}$$
$$5.\;{\cal{T}}_\epsilon(\psi)\rightarrow \psi\;\mbox{ uniformly on
}\Omega\times Y\;\mbox{for any }\;\psi\in{\cal{D}}(\Omega)$$
$$6.\;{\cal{T}}_\epsilon(w)\rightarrow w\;\mbox{ strongly in }\;
L^{2}(\Omega\times Y)$$ 7. Let $\{w_\epsilon\}\subset
L^{2}(\Omega\times Y)$ such that $w_\epsilon\rightarrow w \mbox{
in } L^{2}(\Omega)$. Then
   $${\cal{T}}_\epsilon(w_\epsilon)\rightarrow w \mbox{ in }
L^{2}(\Omega\times
   Y)$$
8. Let $w_\epsilon \rightharpoonup w$ in $H^1(\Omega)$. Then there
exists a subsequence and $\hat{w}\in L^{2}\left (\Omega;
H_{per}^1(Y)\right )$ such that:
$$a) \;{\cal{T}}_\epsilon(w_\epsilon)\rightharpoonup w\mbox{ in
}L^{2}(\Omega; H^1(Y))$$
$$b)\; {\cal{T}}_\epsilon(\nabla
w_\epsilon)\rightharpoonup \nabla_{x}w+\nabla_{y}\hat{w}\mbox{ in }
L^{2}(\Omega\times Y)$$
\end{thm}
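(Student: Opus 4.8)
The plan is to treat items 1--7 as bookkeeping about the $\epsilon$-cell decomposition and to reserve the real work for 8. On each cell $\epsilon\xi+\epsilon Y$ the operator ${\cal T}_\epsilon$ is just composition with the affine map $y\mapsto\epsilon\xi+\epsilon y$; multiplicativity of composition gives 1, and the chain rule gives $\nabla_y({\cal T}_\epsilon(u))=\epsilon{\cal T}_\epsilon(\nabla_x u)$, which is 2. For 3 I would write $\int_{{\tilde\Omega}_\epsilon}u\,dx=\sum_{\xi\in\Xi_\epsilon}\int_{\epsilon\xi+\epsilon Y}u\,dx$ and change variables in each cell: $\int_{\epsilon\xi+\epsilon Y}u\,dx=\epsilon^N\int_Y u(\epsilon\xi+\epsilon y)\,dy=\frac1{|Y|}\int_{(\epsilon\xi+\epsilon Y)\times Y}{\cal T}_\epsilon(u)\,dx\,dy$, and then sum over $\xi$ ($u$ being understood as its stable extension from Remark \ref{I-Stein-ext}, equivalently its extension by zero). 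For 4, by 3 the quantity $\int_\Omega u\,dx-\int_{\Omega\times Y}{\cal T}_\epsilon(u)\,dx\,dy$ equals $\frac1{|Y|}\int_{({\tilde\Omega}_\epsilon\setminus\Omega)\times Y}{\cal T}_\epsilon(u)$, and ${\tilde\Omega}_\epsilon\setminus\Omega$ lies in the union of the cells that meet $\partial\Omega$; each such cell has diameter $\sqrt N\,\epsilon$, hence sits inside $\{x:\mathrm{dist}(x,\partial\Omega)<\sqrt N\,\epsilon\}$, and the cellwise identity used for 3 bounds that integral by $|u|_{L^1}$ over the strip. For 5, if $\psi\in{\cal D}(\Omega)$ then $|\epsilon[x/\epsilon]_Y+\epsilon y-x|=\epsilon|y-\{x/\epsilon\}_Y|\le\sqrt N\,\epsilon$ on $\Omega\times Y$, so $|{\cal T}_\epsilon(\psi)(x,y)-\psi(x)|$ is dominated by the modulus of continuity of $\psi$ evaluated at $\sqrt N\,\epsilon$, which tends to $0$.

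For 6, combining 1 with 3 applied on ${\tilde\Omega}_\epsilon$ gives $\|{\cal T}_\epsilon(\phi)\|_{L^2({\tilde\Omega}_\epsilon\times Y)}=\sqrt{|Y|}\,\|\phi\|_{L^2({\tilde\Omega}_\epsilon)}$, so (with $|Y|=1$) ${\cal T}_\epsilon$ is a contraction from $L^2(\Omega)$ into $L^2(\Omega\times Y)$; together with the uniform convergence in 5 on the dense subspace ${\cal D}(\Omega)$ and a density argument this yields 6. Property 7 is then the triangle inequality: $\|{\cal T}_\epsilon(w_\epsilon)-w\|_{L^2(\Omega\times Y)}\le\|{\cal T}_\epsilon(w_\epsilon-w)\|_{L^2(\Omega\times Y)}+\|{\cal T}_\epsilon(w)-w\|_{L^2(\Omega\times Y)}\le\|w_\epsilon-w\|_{L^2(\Omega)}+\|{\cal T}_\epsilon(w)-w\|_{L^2(\Omega\times Y)}$, and both terms vanish, by hypothesis and by 6.

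For 8 the first step is to bound ${\cal T}_\epsilon(w_\epsilon)$ in $L^2(\Omega;H^1(Y))$: its $L^2(\Omega\times Y)$ norm is $\le\|w_\epsilon\|_{H^1(\Omega)}$ by the contraction property applied to a stable extension, while by 2 its $y$-gradient is $\epsilon\,{\cal T}_\epsilon(\nabla w_\epsilon)$, of norm $\le C\epsilon$. Hence, up to a subsequence, ${\cal T}_\epsilon(w_\epsilon)$ converges weakly in $L^2(\Omega;H^1(Y))$ to a limit with zero $y$-gradient, i.e. a function of $x$ alone; by Rellich's theorem $w_\epsilon\to w$ strongly in $L^2(\Omega)$, so by 7 ${\cal T}_\epsilon(w_\epsilon)\to w$ strongly in $L^2(\Omega\times Y)$, and comparing the two limits forces the limit to be $w$, which is 8(a). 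For 8(b), ${\cal T}_\epsilon(\nabla w_\epsilon)$ is bounded in $L^2(\Omega\times Y)^N$, so along a further subsequence ${\cal T}_\epsilon(\nabla w_\epsilon)\rightharpoonup G$. Writing $M_Y^\epsilon=M_Y\circ{\cal T}_\epsilon$ for the cellwise-average operator appearing after (\ref{I-1.2}) — an orthogonal projection in $L^2$, hence self-adjoint — one checks that $\nabla w_\epsilon\rightharpoonup\nabla w$ in $L^2$ implies $M_Y^\epsilon(\nabla w_\epsilon)\rightharpoonup\nabla w$, by splitting $M_Y^\epsilon(\nabla w_\epsilon)-\nabla w=M_Y^\epsilon(\nabla w_\epsilon-\nabla w)+(M_Y^\epsilon(\nabla w)-\nabla w)$ and using self-adjointness together with $M_Y^\epsilon\psi\to\psi$ for smooth $\psi$ on the first piece and Lebesgue differentiation on the second; since also $M_Y^\epsilon(\nabla w_\epsilon)=M_Y({\cal T}_\epsilon(\nabla w_\epsilon))\rightharpoonup M_Y(G)$, we get $M_Y(G)=\nabla w$.

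It remains to produce $\hat w$. I would introduce the corrected fluctuation
$$\hat w_\epsilon(x,y):=\frac1\epsilon\bigl({\cal T}_\epsilon(w_\epsilon)(x,y)-M_Y^\epsilon(w_\epsilon)(x)\bigr)-\bigl(y-M_Y(y)\bigr)\cdot M_Y^\epsilon(\nabla w_\epsilon)(x),$$
which has zero $Y$-mean and, by 2, satisfies $\nabla_y\hat w_\epsilon={\cal T}_\epsilon(\nabla w_\epsilon)-M_Y^\epsilon(\nabla w_\epsilon)$, a field bounded in $L^2(\Omega\times Y)^N$ with zero $Y$-mean. The Poincar\'e--Wirtinger inequality on $Y$ then bounds $\hat w_\epsilon$ in $L^2(\Omega;H^1(Y))$, so $\hat w_\epsilon\rightharpoonup\hat w$ along a further subsequence, and passing to the limit in the identity for $\nabla_y\hat w_\epsilon$ gives $G=\nabla_x w+\nabla_y\hat w$. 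The main obstacle — the point genuinely specific to unfolding — is to show that $\hat w(x,\cdot)$ is $Y$-\emph{periodic}, so that $\hat w\in L^2(\Omega;H^1_{per}(Y))$: this is not implied by the bounds above but is forced by the quasi-periodicity of ${\cal T}_\epsilon$ (the value of ${\cal T}_\epsilon(w_\epsilon)(x,\cdot)$ at $y+e_k$, read in the cell of $x$, equals its value at $y$ in the adjacent cell of $x+\epsilon e_k$), whose effect in the limit $\epsilon\to0$ is exactly cancelled by the correction term $(y-M_Y(y))\cdot M_Y^\epsilon(\nabla w_\epsilon)$ together with $M_Y^\epsilon(\nabla w_\epsilon)\rightharpoonup\nabla w$ and ${\cal T}_\epsilon(w_\epsilon)\to w$. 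I would carry out this last point following \cite{CDG} and \cite{DA1}.
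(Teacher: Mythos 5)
The paper offers no proof of this theorem: it is quoted directly from Cioranescu--Damlamian--Griso \cite{CDG} (see also \cite{DA1}), and your argument is exactly the standard proof from that source --- items 1--5 by cellwise change of variables and uniform continuity, 6--7 by the isometry property ($\Vert{\cal T}_\epsilon(\phi)\Vert_{L^2(\tilde\Omega_\epsilon\times Y)}=|Y|^{1/2}\Vert\phi\Vert_{L^2(\tilde\Omega_\epsilon)}$, a consequence of 1 and 3) plus density, and item 8 via the $O(\epsilon)$ bound on $\nabla_y{\cal T}_\epsilon(w_\epsilon)$, Rellich, and the corrected fluctuation $\hat w_\epsilon$ with the affine term $(y-M_Y(y))\cdot M_Y^\epsilon(\nabla w_\epsilon)$. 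The single step you defer, the $Y$-periodicity of $\hat w$, is indeed the only delicate point of the whole statement; you correctly identify the mechanism (matching of traces of the unfolded function across adjacent cells, cancelled in the limit by the affine correction) and the reference where it is carried out, so the proposal is sound and matches the cited proof.
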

Another important property of the Unfolding Operator it is
presented in the next Theorem due to Damlamian and Griso, see
\cite{G}.
\begin{thm}
\label{I-per-defect} For any $w\in H^1(\Omega)$ there exists
${\hat{w}}_\epsilon\in L^2(\Omega,H^1_{per}(Y))$ such that
\begin{equation}\label{I-app0}\left\{\begin{array}{ll}
||{\hat{w}}_\epsilon||_{L^2(\Omega,H^1_{per}(Y))}\leq C ||\nabla_x
w||_{[L^2(\Omega)]^N} & \\
||{\cal{T}}_\epsilon(\nabla_x w)-\nabla_x
w-\nabla_y{\hat{w}}_\epsilon||_{L^2(Y,H^{-1}(\Omega))}\leq C\epsilon
||\nabla_x w||_{[L^2(\Omega)]^N} & \end{array}\right.
\end{equation}
where $C$ only depends on $N$ and $\Omega$.
\end{thm}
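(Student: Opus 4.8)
The plan is to build $\hat w_\epsilon$ by a cell-by-cell projection, to read off the $H^1$-bound as an immediate consequence, and to split the defect in (\ref{I-app0}) into a purely ``averaging'' part and a ``rotational'' part, the latter being the only place where the weak ($H^{-1}$) topology in the slow variable is genuinely used. Using the stable extension operator $\mathcal{P}$ from the proof of Lemma \ref{local-av-ineq} we regard $w\in H^1({\tilde\Omega}_\epsilon)$ with $\|w\|_{H^1({\tilde\Omega}_\epsilon)}\le C\|w\|_{H^1(\Omega)}$, so that all period-cell integrals below make sense; the transition between $\Omega$ and ${\tilde\Omega}_\epsilon$ only involves the boundary strip $\{\mathrm{dist}(\cdot,\partial\Omega)<\sqrt N\epsilon\}$ estimated by item 4 of Theorem \ref{I-app-t1}. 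Since the construction is linear in $w$ and the right-hand sides of (\ref{I-app0}) involve only $\|\nabla_x w\|_{L^2}$, it is enough to prove both estimates with a constant independent of $w$ and $\epsilon$. For $x$ in the cell $\epsilon\xi+\epsilon Y$, define $\hat w_\epsilon(x,\cdot)\in W_{per}(Y)$ as the unique solution of
\[
\int_Y\nabla_y\hat w_\epsilon(x,y)\cdot\nabla_y\phi(y)\,dy=\int_Y\big(\mathcal{T}_\epsilon(\nabla_x w)(x,y)-M_Y^\epsilon(\nabla_x w)(x)\big)\cdot\nabla_y\phi(y)\,dy,\qquad\forall\phi\in H^1_{per}(Y);
\]
equivalently, $\nabla_y\hat w_\epsilon(x,\cdot)$ is the $L^2(Y)^N$-orthogonal projection of $\mathcal{T}_\epsilon(\nabla_x w)(x,\cdot)-M_Y^\epsilon(\nabla_x w)(x)$ onto $\{\nabla_y\phi:\phi\in H^1_{per}(Y)\}$. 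This $\hat w_\epsilon$ lies in $L^2({\tilde\Omega}_\epsilon;H^1_{per}(Y))$, depends linearly on $w$, and is constant in $x$ on each cell.

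For the $H^1$-bound, take $\phi=\hat w_\epsilon(x,\cdot)$ above: orthogonal projection and subtraction of the $Y$-mean are norm-nonincreasing, so $\|\nabla_y\hat w_\epsilon(x,\cdot)\|_{L^2(Y)}\le\|\mathcal{T}_\epsilon(\nabla_x w)(x,\cdot)\|_{L^2(Y)}$ for a.e. $x$. Poincaré--Wirtinger on $Y$ (recall $M_Y(\hat w_\epsilon(x,\cdot))=0$) gives $\|\hat w_\epsilon(x,\cdot)\|_{H^1(Y)}\le C\|\mathcal{T}_\epsilon(\nabla_x w)(x,\cdot)\|_{L^2(Y)}$, and integrating over ${\tilde\Omega}_\epsilon$ and using items 1 and 3 of Theorem \ref{I-app-t1} (so that $\int_{{\tilde\Omega}_\epsilon\times Y}|\mathcal{T}_\epsilon(\nabla_x w)|^2=\int_{{\tilde\Omega}_\epsilon}|\nabla_x w|^2$) yields the first line of (\ref{I-app0}).

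For the defect, write $\mathcal{T}_\epsilon(\nabla_x w)-\nabla_x w-\nabla_y\hat w_\epsilon=R+(M_Y^\epsilon(\nabla_x w)-\nabla_x w)$ with $R:=\mathcal{T}_\epsilon(\nabla_x w)-M_Y^\epsilon(\nabla_x w)-\nabla_y\hat w_\epsilon$. The second term is independent of $y$, and $\|M_Y^\epsilon(\nabla_x w)-\nabla_x w\|_{H^{-1}(\Omega)}\le C\epsilon\|\nabla_x w\|_{L^2(\Omega)}$: pairing with $\Phi\in\mathcal{D}(\Omega)^N$ and working cell by cell, one replaces $\Phi$ by its cell mean at the cost $\|\Phi-\overline\Phi\|_{L^2(\text{cell})}\le C\epsilon\|\nabla\Phi\|_{L^2(\text{cell})}$ (Poincaré) and $\nabla_x w$ by $\nabla_x w-\overline{\nabla_x w}$ for free, and then sums by Cauchy--Schwarz over the cells. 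For $R$: by construction $\int_Y R(x,\cdot)\cdot\nabla_y\phi\,dy=0$ for all $\phi\in H^1_{per}(Y)$, so $R(x,\cdot)$ is $Y$-periodically divergence free with zero $Y$-mean; moreover, by item 2 of Theorem \ref{I-app-t1}, $R(x,\cdot)$ equals $\epsilon^{-1}\nabla_y(\mathcal{T}_\epsilon(w)(x,\cdot))$ modulo a constant and modulo a periodic gradient, i.e. it is exactly the non-periodic-gradient component of $\epsilon^{-1}\nabla_y\mathcal{T}_\epsilon(w)$, hence it encodes the failure of $\mathcal{T}_\epsilon(w)(x,\cdot)$ to be $Y$-periodic. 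Since $R$ is constant in $x$ on each cell, the pairing $\int_\Omega R(x,y)\cdot\Phi(x)\,dx$ with $\Phi\in H^1_0(\Omega)^N$ only sees the cell averages $\int_{\epsilon\xi+\epsilon Y}\Phi$; across two adjacent cells the relevant fields differ only through the trace of $w$ on the common face, which $w\in H^1(\Omega)$ forces to be single-valued, so a discrete summation by parts in $\xi$ transfers one difference quotient onto $\Phi$, producing the extra factor $\epsilon$ and the bound $\|R\|_{L^2(Y;H^{-1}(\Omega))}\le C\epsilon\|\nabla_x w\|_{L^2(\Omega)}$. Adding the two parts gives the second line of (\ref{I-app0}), and tracking the constants (the extension $\mathcal{P}$, Poincaré on $Y$ and on cells) shows $C=C(N,\Omega)$.

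The main obstacle is the estimate for $R$: one only has the trivial bound $\|R\|_{L^2({\tilde\Omega}_\epsilon\times Y)}\le C\|\nabla_x w\|_{L^2(\Omega)}$, and the gain of a full power of $\epsilon$ is available only after measuring the slow variable in $H^{-1}(\Omega)$ and quantifying, in that negative norm, how the non-periodicity of $\mathcal{T}_\epsilon(w)(x,\cdot)$ is controlled by the interface-trace continuity of $w$. This discrete summation-by-parts estimate is the technical heart (it is the Damlamian--Griso periodic-defect argument) and the one step that does not reduce to an elementary cell computation.
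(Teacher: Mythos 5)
First, a point of reference: the paper does not prove Theorem \ref{I-per-defect} at all --- it is quoted as a result ``due to Damlamian and Griso'' with a citation to \cite{G} --- so your argument has to stand entirely on its own. The architecture you choose is reasonable and even elegant in places: defining $\hat w_\epsilon(x,\cdot)$ as the cell-by-cell $L^2(Y)^N$-projection of $\mathcal{T}_\epsilon(\nabla_x w)(x,\cdot)-M_Y^\epsilon(\nabla_x w)(x)$ onto periodic gradients makes periodicity of $\hat w_\epsilon$ automatic, and your first line of (\ref{I-app0}) and your bound $\|M_Y^\epsilon(\nabla_x w)-\nabla_x w\|_{L^2(Y,H^{-1}(\Omega))}\leq C\epsilon\|\nabla_x w\|_{L^2(\Omega)}$ (zero cell-mean, replace $\Phi$ by its cell average, Poincar\'e, Cauchy--Schwarz over cells) are correct and essentially complete, modulo routine care with the boundary cells.

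The gap is the estimate $\|R\|_{L^2(Y;H^{-1}(\Omega))}\leq C\epsilon\|\nabla_x w\|_{L^2(\Omega)}$, which is where the entire content of the theorem is concentrated and which you assert rather than prove. To run the ``discrete summation by parts'' you invoke, you must exhibit the piecewise-constant field $\xi\mapsto R_\xi(y)$ in the form $\epsilon A_\xi(y)+\sum_i\bigl(S^i_{\xi+e_i}(y)-S^i_\xi(y)\bigr)$ with $\epsilon^N\sum_\xi\|A_\xi\|^2_{L^2(Y)}+\epsilon^N\sum_\xi\|S^i_\xi\|^2_{L^2(Y)}\leq C\|\nabla_x w\|^2_{L^2(\Omega)}$; only then does transferring the difference quotient onto the cell averages of $\Phi$ produce the factor $\epsilon$. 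No such identity is stated, let alone verified, and it is not automatic: on each cell $R_\xi$ is determined by the failure of $y\mapsto\epsilon^{-1}w(\epsilon\xi+\epsilon y)$ to be periodic, i.e.\ by difference quotients $\epsilon^{-1}(w(\cdot+\epsilon e_i)-w(\cdot))$ on cell faces, and the bound this yields directly is only $\|R\|_{L^2(\Omega\times Y)}\leq C\|\nabla_x w\|_{L^2}$ with no power of $\epsilon$. Moreover the example $w=x_1x_2$ in $N=2$ gives $R(x,y)=\epsilon(y_2-\tfrac12,\,y_1-\tfrac12)$, a nonzero $\xi$-independent field of size exactly $\epsilon$, so $R_\xi$ is not a pure discrete divergence and the decomposition must genuinely separate an $O(\epsilon)$ bulk term from a summable discrete-divergence term. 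The known proofs (Griso \cite{G}; see also \cite{G1}, \cite{DA1}) obtain the factor $\epsilon$ by a different route: the macro--micro splitting $w=Q_\epsilon(w)+(w-Q_\epsilon(w))$ with the interpolate $Q_\epsilon$ of (\ref{I-1.2}), taking $\hat w_\epsilon$ essentially equal to $\epsilon^{-1}\mathcal{T}_\epsilon(w-Q_\epsilon(w))$ (which must then be periodized --- the difficulty your projection avoids but which resurfaces in $R$), and the estimates $\|w-Q_\epsilon(w)\|_{L^2}\leq C\epsilon\|\nabla_x w\|_{L^2}$, $\|\nabla(w-Q_\epsilon(w))\|_{L^2}\leq C\|\nabla_x w\|_{L^2}$ combined with the finite-difference structure of $\nabla Q_\epsilon(w)$. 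Until you either supply the decomposition of $R_\xi$ above or reduce your $R$ to the Griso remainder $\mathcal{T}_\epsilon(\nabla Q_\epsilon(w))-\nabla_x w$, the proof is incomplete precisely at its decisive step --- as your own closing paragraph concedes.
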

Next present some interesting technical results obtained in
\cite{G} which are used in Section 4. Define $\displaystyle
\rho_\epsilon(.)=inf\{\frac{\rho(.)}{\epsilon},1\}$ where
$\rho(x)=dist(x,\partial\Omega)$. Define also
${\hat{\Omega}}_\epsilon=\{x\in\Omega\;;\;\rho(x)<\epsilon\}$ and
for any $\phi\in L^2(\Omega)$ consider
$M_Y^\epsilon(\phi)(x)=\displaystyle\frac{1}{|Y|}\int_{Y}{\cal{T}}_\epsilon(\phi)(x,y)dy$.
Let $v\in H^2(\Omega)$ be arbitrarily fixed, and the
regularization $Q_\epsilon$ defined at (\ref{I-1.2}). Then (see
Griso \cite{G}, for the proofs)
\begin{prop} We have
\label{I-app1} $$1.\;\; \displaystyle
||\nabla_x\rho_\epsilon||_{L^\infty(\Omega)}=
||\nabla_x\rho_\epsilon||_{L^\infty({\hat{\Omega}}_\epsilon)}=\epsilon^{-1}$$
$$2.\;\; \displaystyle
||(1-\rho_\epsilon)v||_{[L^2(\Omega)]^N}\leq ||
v||_{[L^2({\hat{\Omega}}_\epsilon)]^N}\leq C\epsilon^\frac{1}{2}||v||_{H^1(\Omega)}\;\mbox{ for any }\;v\in H^1(\Omega)$$
$$3.\;\;\displaystyle \!\!\!||\nabla_x
v||_{L^2({\hat{\Omega}}_\epsilon)}\leq
C\epsilon^{\frac{1}{2}}||v||_{H^2(\Omega)}\Rightarrow
||Q_\epsilon(\nabla_x
v)||_{L^2({\hat{\Omega}}_\epsilon)}+||M_Y^\epsilon(\nabla_x
v)||_{L^2({\hat{\Omega}}_\epsilon)}\displaystyle\leq
C\epsilon^{\frac{1}{2}}||v||_{H^2(\Omega)}$$ for any $v\in
H^2(\Omega)$.
$$4.\; \displaystyle
||\psi(\frac{.}{\epsilon})||_{L^2({\hat{\Omega}}_\epsilon)}+
||\nabla_y\psi(\frac{.}{\epsilon})||_{L^2({\hat{\Omega}}_\epsilon)}\leq
C\epsilon^{\frac{1}{2}}||\psi||_{H^1(Y)}\mbox{ for every }\psi\in
H^1_{per}(Y)$$
$$5. \;\;||M_Y^\epsilon(v)||_{L^2(\Omega)}\leq
||v||_{L^2({\tilde{\Omega}}_\epsilon)}\;\mbox{ for any }\;v\in
L^2({\tilde{\Omega}}_\epsilon)$$
\begin{equation}\nonumber
 \displaystyle 6.\;\left\{ \begin{array}{ll}||v-M_Y^\epsilon(v)||_{L^2(\Omega)}\leq
C\epsilon ||\nabla
v||_{[{L^2(\Omega)}]^N} & \\
\displaystyle||v- {\cal{T}}_\epsilon(v)||_{L^2(\Omega\times Y)}\leq
C\epsilon||\nabla v||_{[{L^2(\Omega)}]^N} & \\
\displaystyle||Q_\epsilon(v)- M_Y^\epsilon(v)||_{L^2(\Omega)}\leq
C\epsilon ||\nabla v||_{[L^2(\Omega)]^N} &   {\;\mbox{ for any
}\;v\in H^1(\Omega)}\end{array}\right.\end{equation}
$$7.\;\; \displaystyle||Q_\epsilon(v)\psi(\frac{.}{\epsilon})||_{L^2(\Omega)}\leq
C||v||_{L^2({\tilde{\Omega}}_{\epsilon,2})}||\psi||_{L^2(Y)}
\;\mbox{ for any }\;v\in L^2({\tilde{\Omega}}_{\epsilon,2})\mbox{
and }\psi\in L^2(Y)$$
\end{prop}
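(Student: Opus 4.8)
All seven statements are local, cellwise facts, and the plan is to prove each by rescaling a fixed inequality on $Y$ and summing over the $O(\epsilon^{-N})$ cells of ${\tilde\Omega}_\epsilon$. The recurring tools are: the Lipschitz regularity of $\rho(x)=\mathrm{dist}(x,\partial\Omega)$; a boundary-strip trace estimate; the Poincar\'e--Wirtinger inequality on $Y$; Jensen's inequality to dominate the averaging operators $M_Y^\epsilon$ and $Q_\epsilon$ by $L^2$ norms over unions of cells; and the stable extension operator ${\cal P}:H^1(\Omega)\to H^1({\tilde\Omega}_\epsilon)$ (with $\epsilon$-independent norm), which lets one run the cellwise estimates on genuine unit cells rather than on cells truncated by $\partial\Omega$. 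For (1), since $\rho$ is $1$-Lipschitz with $|\nabla\rho|=1$ a.e.\ on a neighbourhood of $\partial\Omega$ (by the $C^{1,1}$ or convexity hypothesis), one has $\rho_\epsilon=\rho/\epsilon$ on $\hat\Omega_\epsilon$ and $\rho_\epsilon\equiv 1$ off it; hence $\nabla\rho_\epsilon=\epsilon^{-1}\nabla\rho$ a.e.\ on $\hat\Omega_\epsilon$ and vanishes elsewhere, giving both equalities. For (2), the first inequality is immediate from $0\le 1-\rho_\epsilon\le 1$ and $1-\rho_\epsilon\equiv 0$ off $\hat\Omega_\epsilon$; for the second I would parametrise a tubular neighbourhood of $\partial\Omega$, write $v(x)=v(x')+\int_0^t\partial_n v\,ds$ for $x$ at distance $t\le\epsilon$ from its projection $x'\in\partial\Omega$, square, integrate in $t$ over $(0,\epsilon)$, and use the trace inequality $\|v\|_{L^2(\partial\Omega)}\le C\|v\|_{H^1(\Omega)}$ to get $\|v\|_{L^2(\hat\Omega_\epsilon)}^2\le C\epsilon\|v\|_{L^2(\partial\Omega)}^2+C\epsilon^2\|\nabla v\|_{L^2(\hat\Omega_\epsilon)}^2\le C\epsilon\|v\|_{H^1(\Omega)}^2$.

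For (4), by $Y$-periodicity each cell contributes $\epsilon^N\|\psi\|_{L^2(Y)}^2$ (resp.\ $\epsilon^N\|\nabla_y\psi\|_{L^2(Y)}^2$), and the number of cells meeting the strip $\hat\Omega_\epsilon$ is $O(\epsilon^{-(N-1)})$, so the total is $O(\epsilon)\|\psi\|_{H^1(Y)}^2$. For (5), $M_Y^\epsilon(v)$ is constant on each cell and equal to its average there; Jensen's inequality on each cell and summation over $\xi\in\Xi_\epsilon$ give $\|M_Y^\epsilon(v)\|_{L^2(\Omega)}\le\|M_Y^\epsilon(v)\|_{L^2({\tilde\Omega}_\epsilon)}\le\|v\|_{L^2({\tilde\Omega}_\epsilon)}$. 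For (3), since $\nabla v\in[H^1(\Omega)]^N$, the stated hypothesis is exactly (2) applied componentwise; then $M_Y^\epsilon(\nabla v)$, resp.\ $Q_\epsilon(\nabla v)$ (a barycentric combination of vertex cell-averages by the formula in (\ref{I-1.2})), evaluated on $\hat\Omega_\epsilon$ is an average over cells contained in the enlarged strip $\hat\Omega_{C\epsilon}$, so Jensen together with (2) on that enlarged strip yields the bound.

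For (6), extending $v$ through ${\cal P}$, the Poincar\'e--Wirtinger inequality on $Y$ rescaled to the cell $\epsilon\xi+\epsilon Y$ gives $\|v-M_Y^\epsilon(v)\|_{L^2(\epsilon\xi+\epsilon Y)}\le C\epsilon\|\nabla v\|_{L^2(\epsilon\xi+\epsilon Y)}$; squaring and summing over $\xi$ gives (6a). Estimate (6b) is the same after using property $3$ of Theorem \ref{I-app-t1} to rewrite $\|v-{\cal T}_\epsilon(v)\|_{L^2(\Omega\times Y)}^2$ as a sum over cells of the $L^2(Y)$-deviation of $v(\epsilon\xi+\epsilon\,\cdot)$ from a cell constant, again controlled by Poincar\'e--Wirtinger. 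For (6c), writing $Q_\epsilon(v)-M_Y^\epsilon(v)$ on a cell as a barycentric combination of the differences $M_Y^\epsilon(v)(\epsilon\xi+\epsilon i)-M_Y^\epsilon(v)(\epsilon\xi)$ between neighbouring cell averages, each such difference is $\le C\epsilon\|\nabla v\|$ over the two adjacent cells (a discrete-gradient estimate from Poincar\'e on the union of two cells), and summation finishes. For (7), on a cell the defining formula (\ref{I-1.2}) and Jensen give $|Q_\epsilon(v)(x)|^2\le\sum_i \bar x^{\,i}_\xi(x)\,|M_Y^\epsilon(v)(\epsilon\xi+\epsilon i)|^2$; multiplying by $|\psi(x/\epsilon)|^2$ and integrating over the cell (the weights $\bar x^{\,i}_\xi\le 1$) gives $\le\epsilon^N\|\psi\|_{L^2(Y)}^2\sum_i|M_Y^\epsilon(v)(\epsilon\xi+\epsilon i)|^2$; summing over cells, each vertex average is reused by at most $2^N$ cells, and by (5) the total of $\epsilon^N|M_Y^\epsilon(v)(\cdot)|^2$ over all vertices is $\le C\|v\|_{L^2({\tilde\Omega}_{\epsilon,2})}^2$, whence the claim.

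The only genuinely delicate points are bookkeeping ones. First, the averaging operators $M_Y^\epsilon$ and $Q_\epsilon$ ``reach'' $O(\epsilon)$ beyond the point of evaluation, so in (3) and (7) one must pass to an enlarged strip $\hat\Omega_{C\epsilon}$, resp.\ to ${\tilde\Omega}_{\epsilon,2}$, before invoking (2) or (5). Second, cells straddling $\partial\Omega$ lie only partially inside $\Omega$, which is why one first extends through ${\cal P}$ so that the rescaled Poincar\'e--Wirtinger inequality applies on true unit cells; the stability of ${\cal P}$ in $\epsilon$ is what keeps all constants uniform. Beyond these two observations, every estimate is a rescaling of a fixed inequality on $Y$ followed by summation, so no further obstacle is anticipated.
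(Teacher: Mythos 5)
The paper offers no proof of this proposition: it is quoted from Griso's work, with the reader referred to \cite{G} for the arguments, so there is nothing internal to compare your proposal against. Your sketch is nevertheless correct and reconstructs the standard unfolding-method proofs: the boundary-strip estimate $\|v\|_{L^2(\hat\Omega_\epsilon)}\le C\epsilon^{1/2}\|v\|_{H^1(\Omega)}$ for items 2--4, Jensen's inequality for the averaging operators in items 5 and 7, and the rescaled Poincar\'e--Wirtinger inequality (on single cells, and on unions of adjacent cells for the discrete-gradient bound in 6c) for item 6, all made legitimate on boundary cells by the $\epsilon$-uniform extension operator. You have also correctly identified the two genuine bookkeeping issues --- the need to pass to an enlarged strip in item 3 and to the doubled cell cover $\tilde\Omega_{\epsilon,2}$ in item 7 because $Q_\epsilon$ and $M_Y^\epsilon$ reach one cell beyond the evaluation point --- so I see no gap.
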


\section{Convergence results and the smoothing argument}
\label{smoothing}

Let $m_n\in C^\infty$ be the standard mollifying sequence, i.e.,
$0<m_n\leq 1$, $\int_{{\mathbb R}^N}m_ndz=1$, $sppt(m_n)\subset
B(0,\frac{1}{n})$. Define $A^n(y)=(m_n*A)(y)$, where $a$ has been
defined in the Introduction (see (\ref{I-epsilon-problem})). We
have:
$$ 1. A^n-Y \;\mbox{ periodic matrix } $$
$$2. |A^n|_{L^\infty}<|A|_{L^\infty}$$
\begin{equation}
\label{I-app2}
 3. A^n\rightarrow A\;\mbox{ in }\;L^p\;\mbox{ for any
 }\;p\in(1,\infty)
 \end{equation}
From (\ref{I-app2}) we have that $c|\xi|^2\leq
A^n_{ij}(y)\xi_i\xi_j\leq C|\xi|^2\;\forall \xi\in{{\mathbb
R}}^N$. Define
\begin{equation}
\label{I-aa}
 ({\cal{A}}^{hom}_n)_{ij}=\displaystyle
M_Y(A^n_{ij}(y)+A^n_{ik}(y)\frac{\partial\chi^n_j}{\partial y_k})
\end{equation} where $M_Y(\cdot)=\displaystyle\frac{1}{|Y|}\int_Y\cdot dy$
and $\chi^n_j\in W_{per}(Y)$ are the solutions of the local problem
\begin{equation}
\label{I-first-local}
 -\nabla_y\cdot(A(y)(\nabla\chi^n_j+e_j))=0
\end{equation}
Next we present a few important convergence results needed in the
smoothing argument developed in the previous Sections.
\begin{lemma}
\label{I-app-l1} Let $f_n, f\in H^{-1}(\Omega)$ with
$f_n\rightharpoonup f$ in $H^{-1}(\Omega)$ and let $b^n, b\in
L^{\infty}(\Omega)$, with
$$c|\xi|^2\leq b^n_{ij}(y)\xi_i\xi_j\leq C|\xi|^2$$
$$c|\xi|^2\leq b_{ij}(y)\xi_i\xi_j\leq C|\xi|^2$$
for all $\xi\in{{\mathbb R}}^N$ and
$$b^n\rightarrow b\;\mbox{ in }\; L^2(\Omega)$$
Consider $ \zeta_n\in H_0^1(\Omega)$ the solution of
$$\displaystyle\int_{\Omega} b^n(x)\nabla \zeta_n\nabla\psi
dx=\int_{\Omega} f_n\psi dx$$ for any $\psi\in H_0^1(\Omega)$. Then
we have
$$\zeta_n\rightharpoonup \zeta\;\mbox{ in }\;H^1_0(\Omega)$$
and $\zeta$ verifies
$$\displaystyle\int_{\Omega} b(x)\nabla
\zeta\nabla\psi dx=\int_{\Omega} f\psi dx\;\mbox{ for any }\;\psi\in
H_0^1(\Omega).$$
\end{lemma}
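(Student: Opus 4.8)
The plan is to prove Lemma \ref{I-app-l1}, which is a standard continuity result for solutions of linear elliptic problems under simultaneous strong convergence of the (uniformly elliptic, uniformly bounded) coefficient matrices and weak convergence of the right-hand sides in $H^{-1}$. First I would establish an a priori bound: testing the equation for $\zeta_n$ against $\zeta_n$ itself and using the uniform coercivity $c|\xi|^2\leq b^n_{ij}\xi_i\xi_j$ together with the uniform bound $\|f_n\|_{H^{-1}(\Omega)}\leq C$ (which follows from weak convergence, by the uniform boundedness principle), we get $c\|\nabla\zeta_n\|_{[L^2(\Omega)]^N}^2\leq \langle f_n,\zeta_n\rangle\leq C\|\nabla\zeta_n\|_{[L^2(\Omega)]^N}$, hence $\|\zeta_n\|_{H^1_0(\Omega)}\leq C$ uniformly in $n$. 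By reflexivity, along a subsequence $\zeta_n\rightharpoonup\zeta$ in $H^1_0(\Omega)$ for some $\zeta\in H^1_0(\Omega)$.

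Next I would pass to the limit in the variational identity $\int_\Omega b^n\nabla\zeta_n\cdot\nabla\psi\,dx=\langle f_n,\psi\rangle$ for a fixed test function $\psi\in H^1_0(\Omega)$. The right-hand side converges to $\langle f,\psi\rangle$ directly from $f_n\rightharpoonup f$ in $H^{-1}(\Omega)$. For the left-hand side, the key point is to write $b^n\nabla\zeta_n\cdot\nabla\psi = (b^n-b)\nabla\zeta_n\cdot\nabla\psi + b\nabla\zeta_n\cdot\nabla\psi$. The second term converges to $\int_\Omega b\nabla\zeta\cdot\nabla\psi\,dx$ because $\nabla\zeta_n\rightharpoonup\nabla\zeta$ weakly in $[L^2(\Omega)]^N$ and $b\nabla\psi\in[L^2(\Omega)]^N$ is a fixed $L^2$ function (using $b\in L^\infty(\Omega)$). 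For the first term, we bound $|\int_\Omega (b^n-b)\nabla\zeta_n\cdot\nabla\psi\,dx|$; here we cannot simply use $\|b^n-b\|_{L^\infty}$ (which need not go to zero), so instead we use $\|b^n-b\|_{L^2(\Omega)}\to 0$ combined with $\nabla\zeta_n$ bounded in $L^2$ and $\nabla\psi$ bounded, estimating via Hölder: $|(b^n-b)\nabla\zeta_n\cdot\nabla\psi|\leq \|b^n-b\|_{L^2}\,\|\nabla\zeta_n\|_{L^2}\,\|\nabla\psi\|_{L^\infty}$ — but this requires $\psi$ smooth, so I would first take $\psi\in C^\infty_c(\Omega)$, pass to the limit there, and then extend to all $\psi\in H^1_0(\Omega)$ by density (the limiting identity is continuous in $\psi$ for the $H^1_0$ norm since $b\in L^\infty$ and $\nabla\zeta\in L^2$).

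This shows that every weakly convergent subsequence of $\{\zeta_n\}$ has a limit $\zeta$ solving $\int_\Omega b\nabla\zeta\cdot\nabla\psi\,dx=\langle f,\psi\rangle$ for all $\psi\in H^1_0(\Omega)$. Since the limit problem has a unique solution by Lax–Milgram (using coercivity $c|\xi|^2\leq b_{ij}\xi_i\xi_j$ and $b\in L^\infty$), the limit is independent of the subsequence, and a standard Urysohn-type argument gives that the whole sequence $\zeta_n\rightharpoonup\zeta$ in $H^1_0(\Omega)$. The main obstacle — and the only place where any care is needed — is handling the term $(b^n-b)\nabla\zeta_n$: since $b^n\to b$ only in $L^2$ (not $L^\infty$) and $\nabla\zeta_n$ only converges weakly, their product is a product of two weakly/strongly-in-$L^2$ quantities paired against $\nabla\psi$, which forces the density argument through smooth test functions as described above; without the $L^2$-strong convergence of $b^n$ this would fail, so that hypothesis is essential.
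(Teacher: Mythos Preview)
Your proposal is correct and follows essentially the same approach as the paper: obtain the uniform $H^1_0$ bound by testing with $\zeta_n$, extract a weakly convergent subsequence, pass to the limit in the variational formulation first for smooth test functions (the paper simply writes ``for any smooth $\psi\in H_0^1(\Omega)$ easily it can be seen that\ldots''), and then invoke uniqueness of the limit problem to upgrade to convergence of the full sequence. Your write-up merely makes explicit the splitting $b^n=(b^n-b)+b$ and the density step that the paper leaves implicit.
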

\begin{proof}
Immediately can be observed that
$$
||\zeta_n||_{H_0^1(\Omega)}\leq C$$ and therefore there exists
$\zeta$ such that on a subsequence still denoted by $n$ we have
\begin{equation}
\label{I-app3} \zeta_n\rightharpoonup \zeta\;\mbox{ in
}\;H_0^1(\Omega)
\end{equation}
For any smooth $\psi\in H_0^1(\Omega)$ easily it can be seen that
$$\displaystyle\int_{\Omega} b^n(x)\nabla \zeta_n\nabla\psi
dx\rightarrow \displaystyle\int_{\Omega} b(x)\nabla
\zeta\nabla\psi dx$$ and this implies the statement of the Lemma.
Due to the uniqueness of $\varphi$ one can see that the limit
(\ref{I-app3}) holds on the entire sequence. \end{proof}
\begin{rem}
\label{I-app-r0} Using similar arguments it can be proved that the
results of Lemma \ref{I-app-l1} hold true if we replace the
Dirichlet boundary conditions with periodic boundary conditions.
\end{rem}
\begin{cor}
\label{I-app-r1} Let $u_\epsilon^n\in H_0^1(\Omega)$ be the solution
of
\begin{equation}
\nonumber
 \left\{\begin{array}{ll}
 -\nabla\cdot(A^n(\displaystyle\frac{x}{\epsilon})\nabla u^n_\epsilon)=f
 & \mbox{ in }\Omega \\
 u^n_\epsilon = 0 & \mbox{ on }\partial\Omega
 \end{array}\right .\end{equation}
We then have
$$u_\epsilon^n\stackrel{n}{\rightharpoonup} u_\epsilon\;\mbox{ in
}\;H_0^1(\Omega)$$ where $u_\epsilon$ verifies
\begin{equation}
\nonumber
 \left\{\begin{array}{ll}
 -\nabla\cdot(A(\displaystyle\frac{x}{\epsilon})\nabla u_\epsilon)=f
 & \mbox{ in }\Omega \\
 u_\epsilon = 0 & \mbox{ on }\partial\Omega
 \end{array}\right .\end{equation}
\end{cor}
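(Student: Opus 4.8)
The plan is to read this off directly from Lemma \ref{I-app-l1}. First I would fix $\epsilon>0$ and put $b^n(x)=A^n(x/\epsilon)$, $b(x)=A(x/\epsilon)$ and $f_n=f$ for all $n$; with these choices the solution $u^n_\epsilon$ of the stated problem is precisely the function $\zeta_n$ of Lemma \ref{I-app-l1} and $u_\epsilon$ is its $\zeta$. Since $f_n=f$ the weak convergence $f_n\rightharpoonup f$ in $H^{-1}(\Omega)$ is trivial, so the whole argument reduces to verifying the two structural hypotheses of the Lemma: uniform ellipticity and boundedness of the $b^n$ and $b$, and the strong convergence $b^n\to b$ in $L^2(\Omega)$.

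The ellipticity is immediate. From the line following (\ref{I-app2}) we have $c|\xi|^2\leq A^n_{ij}(y)\xi_i\xi_j\leq C|\xi|^2$ for every $y$ and every $\xi\in{\mathbb R}^N$, with $c,C$ independent of $n$, and the matrix $A$ satisfies the corresponding bounds by the hypotheses of the first section; evaluating at $y=x/\epsilon$ leaves the constants unchanged, so $b^n$ and $b$ are uniformly elliptic and bounded as required.

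For the strong convergence I would use the change of variables $y=x/\epsilon$. Because $A^n-A$ is $Y$-periodic and $\Omega$ is bounded,
\begin{equation}
\nonumber
\int_{\Omega}|A^n(x/\epsilon)-A(x/\epsilon)|^2\,dx=\epsilon^N\int_{\epsilon^{-1}\Omega}|A^n(y)-A(y)|^2\,dy\leq C(\epsilon,\Omega)\,\|A^n-A\|^2_{L^2(Y)},
\end{equation}
where $C(\epsilon,\Omega)$ is proportional to the number of unit cells needed to cover $\epsilon^{-1}\Omega$ (a fixed finite number once $\epsilon$ is fixed). By (\ref{I-app2}) with $p=2$ the right-hand side tends to $0$, so $b^n\to b$ in $L^2(\Omega)$. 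Lemma \ref{I-app-l1} now applies verbatim and gives $u^n_\epsilon\rightharpoonup u_\epsilon$ in $H^1_0(\Omega)$ with $u_\epsilon$ the unique solution of the problem with coefficient $A(\cdot/\epsilon)$, which is the assertion.

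There is essentially no obstacle: the single computation that is not a one-line citation of Lemma \ref{I-app-l1} is the scaling estimate above, which converts the periodic $L^p$-convergence $A^n\to A$ into $L^2(\Omega)$-convergence of the $\epsilon$-rescaled coefficients. I would only flag that $\epsilon$ must be held fixed throughout — this corollary concerns the approximation of the oscillatory operator by its mollified version, not the homogenization limit $\epsilon\to 0$ — so no uniformity of the constants in $\epsilon$ is needed or claimed.
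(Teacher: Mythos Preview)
Your argument is correct and is essentially the paper's own proof, only spelled out in more detail: the paper simply asserts that (\ref{I-app2}) gives $A^n(\cdot/\epsilon)\to A(\cdot/\epsilon)$ in $L^2(\Omega)$ and then invokes the lemma, whereas you supply the explicit scaling estimate that justifies this convergence. The only cosmetic discrepancy is that the paper cites Remark~\ref{I-app-r0} rather than Lemma~\ref{I-app-l1}, but since the problem here has Dirichlet data your direct appeal to Lemma~\ref{I-app-l1} is the appropriate one.
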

\begin{proof}
Using (\ref{I-app2}) we have that $$\displaystyle
A^n(\frac{x}{\epsilon})\stackrel{n}\rightarrow \displaystyle
A(\frac{x}{\epsilon})\;\mbox{ in }\;L^2(\Omega)$$ and the
statement follows immediately from Remark \ref{I-app-r0}.
\end{proof}
\begin{cor}
\label{I-app-r2} for $j\in\{1,...,N\}$, let $\chi_j^n\in W_{per}(Y)$
be the solution of
\begin{equation}
\label{I-2star'}
 \displaystyle-\nabla_y\cdot(A^n(y)(\nabla
 \chi_j^n+e_j))=0
\end{equation}
where $\{e_j\}_j$ denotes the canonical basis of ${\mathbb R}^N$.
Then we have
$$\chi_j^n\rightharpoonup \chi_j\;\mbox{ in }\; W_{per}(Y)$$
where $\chi_j\in W_{per}(Y)$ verifies
\begin{equation}
\nonumber
 \displaystyle-\nabla_y\cdot(A(y)(\nabla
 \chi_j+e_j))=0
\end{equation}
\end{cor}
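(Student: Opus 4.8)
The plan is to observe that, once the known term is moved to the right-hand side, the cell problem (\ref{I-2star'}) falls exactly into the framework of Lemma \ref{I-app-l1}, in its periodic version provided by Remark \ref{I-app-r0}. First I would record the weak formulation: $\chi_j^n\in W_{per}(Y)$ is the unique solution of
$$\int_Y A^n(y)\nabla_y\chi_j^n\cdot\nabla_y\psi\,dy=-\int_Y A^n(y)e_j\cdot\nabla_y\psi\,dy\qquad\mbox{ for all }\psi\in W_{per}(Y),$$
so that, setting $f_n\doteq\nabla_y\cdot(A^n e_j)\in (W_{per}(Y))'$ and $b^n\doteq A^n$, this is precisely the equation appearing in Lemma \ref{I-app-l1}.

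Next I would check that all the hypotheses of that lemma are contained in (\ref{I-app2}): the mollified matrices $A^n$ are $Y$-periodic, uniformly bounded, and uniformly elliptic with constants independent of $n$, and $b^n=A^n\to A=b$ in $L^2(Y)$. For the data, since $A^n e_j\to Ae_j$ in $[L^2(Y)]^N$ we obtain $f_n\to f\doteq\nabla_y\cdot(Ae_j)$ strongly, hence weakly, in $(W_{per}(Y))'$. Applying Lemma \ref{I-app-l1} through Remark \ref{I-app-r0} then yields $\chi_j^n\rightharpoonup\chi_j$ in $W_{per}(Y)$, where $\chi_j\in W_{per}(Y)$ satisfies $\int_Y A(y)\nabla_y\chi_j\cdot\nabla_y\psi\,dy=-\int_Y A(y)e_j\cdot\nabla_y\psi\,dy$ for every $\psi\in W_{per}(Y)$, i.e. $-\nabla_y\cdot(A(y)(\nabla_y\chi_j+e_j))=0$. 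The Poincare-Wirtinger inequality on $W_{per}(Y)$ together with Lax-Milgram gives uniqueness of $\chi_j$, so the convergence holds along the whole sequence and not merely along a subsequence.

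There is no genuine obstacle here: the corollary is a direct transcription of Lemma \ref{I-app-l1} to the cell problem. The only two points worth stating explicitly are that the right-hand side $\nabla_y\cdot(A^n e_j)$ converges in the negative-order dual space (which follows from the strong $L^2$ convergence of $A^n e_j$) and that the ellipticity bounds for $A^n$ are $n$-independent; both are immediate consequences of (\ref{I-app2}).
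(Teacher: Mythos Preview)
Your proposal is correct and follows essentially the same approach as the paper: both identify the cell problem as an instance of Lemma \ref{I-app-l1} in its periodic form (Remark \ref{I-app-r0}), observe from (\ref{I-app2}) that the right-hand side $\nabla_y\cdot(A^n e_j)=\partial_{y_i}A^n_{ij}$ converges in $(W_{per}(Y))'$, and conclude. Your write-up is simply more explicit about verifying the hypotheses and about why the limit holds along the full sequence.
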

\begin{proof}
From (\ref{I-app2}) we obtain
$$\displaystyle\frac{\partial}{\partial
y_i}A_{ij}^n(y)\rightharpoonup \frac{\partial}{\partial
y_i}A_{ij}(y)\;\mbox{ in }\; (W_{per}(Y))'$$ The statement of the
Remark follows then immediately from Remark \ref{I-app-r0}.
\end{proof}
\begin{prop}
\label{I-app-r3} Let $v\in [H^1(\Omega)]^N$ be arbitrarily fixed
and for every $j\in\{1,..,N\}$, let $\chi_j\in W_{per}(Y)$ be
defined as in (\ref{I-first-local}), and $\chi_j^n\in W_{per}(Y)$,
for $j\in\{1,..,N\}$, to be the solutions of (\ref{I-2star'}).

 \noindent Define $\displaystyle
h^n(x,\frac{x}{\epsilon})=\chi_j^n(\frac{x}{\epsilon}) v_j$,
$\displaystyle h(x,\frac{x}{\epsilon})=\chi_j(\frac{x}{\epsilon})
v_j$, $\displaystyle
g^n(x,\frac{x}{\epsilon})=\chi_j^n(\frac{x}{\epsilon})
Q_\epsilon(v_j)$, $\displaystyle
g(x,\frac{x}{\epsilon})=\chi_j(\frac{x}{\epsilon})
Q_\epsilon(v_j)$. We have that
$$1.\;\;g^n\stackrel{n}{\rightharpoonup}g\;\mbox{ in }\; H^1(\Omega)$$
$$2.\;\;\mbox{ If } v\in [W^{1,p}(\Omega)]^N,\;p>N,\mbox{ then }\;, h^n\stackrel{n}{\rightharpoonup}h\;\mbox{ in }\; H^1(\Omega)$$
\end{prop}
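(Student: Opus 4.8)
The plan is to read the statement not as a homogenization (two–scale) limit but as a \emph{continuity in the coefficient} statement: here $\epsilon$ is fixed and $n\to\infty$, so the only ``fast–scale'' ingredient needed is the cell–problem convergence $\chi_j^n\rightharpoonup\chi_j$ in $W_{per}(Y)$ already recorded in Corollary~\ref{I-app-r2}. First I would extract from that convergence, together with the uniform ellipticity inherited by $A^n$ (recall $|A^n|_{L^\infty}\le|A|_{L^\infty}$, so the energy estimate for (\ref{I-2star'}) is uniform in $n$), the uniform bound $\|\chi_j^n\|_{H^1(Y)}\le C$; then the Rellich compact embedding $H^1(Y)\hookrightarrow L^2(Y)$ upgrades the weak convergence to $\chi_j^n\to\chi_j$ strongly in $L^2(Y)$, in fact in $L^q(Y)$ for every $q<\frac{2N}{N-2}$.

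Next I would rescale these facts to $\Omega$. Extending $\chi_j^n$ by $Y$-periodicity and summing over the cells of ${\tilde{\Omega}}_\epsilon$ gives, for each fixed $\epsilon$ and uniformly in $n$,
\begin{equation}
\nonumber
\big\|\chi_j^n(\tfrac{\cdot}{\epsilon})\big\|_{L^2(\Omega)}+\big\|(\nabla_y\chi_j^n)(\tfrac{\cdot}{\epsilon})\big\|_{L^2(\Omega)}\le C\big(\|\chi_j^n\|_{L^2(Y)}+\|\nabla_y\chi_j^n\|_{L^2(Y)}\big)\le C,
\end{equation}
together with the strong convergence $\chi_j^n(\tfrac{\cdot}{\epsilon})\to\chi_j(\tfrac{\cdot}{\epsilon})$ in $L^2(\Omega)$ (and in $L^q(\Omega)$ for $q<\frac{2N}{N-2}$).

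The two assertions are then handled in parallel. For part~1, note that for fixed $\epsilon$ the function $Q_\epsilon(v_j)$ is a fixed, continuous, piecewise-multilinear function, hence lies in $W^{1,\infty}(\Omega)$; differentiating $g^n=\chi_j^n(\tfrac{\cdot}{\epsilon})\,Q_\epsilon(v_j)$ by the product rule and using the bounds above yields $\|g^n\|_{H^1(\Omega)}\le C$ uniformly in $n$, while $\|g^n-g\|_{L^2(\Omega)}\le\|Q_\epsilon(v_j)\|_{L^\infty}\,\|(\chi_j^n-\chi_j)(\tfrac{\cdot}{\epsilon})\|_{L^2(\Omega)}\to0$. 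For part~2, the hypothesis $p>N$ enters exactly in the $H^1(\Omega)$ bound for $h^n=\chi_j^n(\tfrac{\cdot}{\epsilon})v_j$: the embedding $W^{1,p}(\Omega)\hookrightarrow L^\infty(\Omega)$ controls $\|(\nabla_y\chi_j^n)(\tfrac{\cdot}{\epsilon})v_j\|_{L^2(\Omega)}\le\|v_j\|_{L^\infty}\|(\nabla_y\chi_j^n)(\tfrac{\cdot}{\epsilon})\|_{L^2(\Omega)}$, and pairing $\chi_j^n(\tfrac{\cdot}{\epsilon})\in L^q(\Omega)$ (with $q=\frac{2N}{N-2}$ if $N=3$ and $q$ arbitrary if $N=2$) against $\nabla v_j\in L^p(\Omega)$ by Hölder is admissible precisely because $p>N$; again $\|h^n-h\|_{L^2(\Omega)}\le\|v_j\|_{L^\infty}\|(\chi_j^n-\chi_j)(\tfrac{\cdot}{\epsilon})\|_{L^2(\Omega)}\to0$. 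In both cases a sequence bounded in $H^1(\Omega)$ and convergent in $L^2(\Omega)$ converges weakly in $H^1(\Omega)$ to its $L^2$-limit (every weak-$H^1$ limit point of a subsequence coincides with the $L^2$-limit and lies in $H^1$), which gives $g\in H^1(\Omega)$, $g^n\rightharpoonup g$, and $h\in H^1(\Omega)$, $h^n\rightharpoonup h$.

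The only real obstacle is the uniform-in-$n$ $H^1(\Omega)$ bound: it hinges on the uniform ellipticity of $A^n$ (so that $\|\chi_j^n\|_{H^1(Y)}\le C$ independently of $n$) and, in part~2, on choosing the right Sobolev/Hölder pairing, where $p>N$ is exactly the threshold that makes both $v_j\in L^\infty(\Omega)$ and $\chi_j^n(\tfrac{\cdot}{\epsilon})\nabla v_j\in L^2(\Omega)$ available; everything else is routine rescaling plus the elementary ``$H^1$-bounded $+$ $L^2$-convergent $\Rightarrow$ weakly $H^1$-convergent'' principle.
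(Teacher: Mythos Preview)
Your proposal is correct and follows essentially the same route as the paper: obtain the uniform bound $\|\chi_j^n\|_{H^1(Y)}\le C$ from Corollary~\ref{I-app-r2}, deduce a uniform $H^1(\Omega)$ bound on $g^n$ (resp.\ $h^n$) via the product rule, show strong $L^2(\Omega)$ convergence to $g$ (resp.\ $h$), and conclude weak $H^1$ convergence. The only cosmetic difference is that for the cross term $\chi_j^n(\tfrac{\cdot}{\epsilon})\nabla v_j$ in part~2 the paper invokes the interpolation inequality (\ref{I-app5}) to place $\chi_j^n(\tfrac{\cdot}{\epsilon})$ in $L^{2p/(p-2)}$, whereas you use the critical Sobolev embedding $H^1\hookrightarrow L^{2N/(N-2)}$; both pairings with $\nabla v_j\in L^p$, $p>N$, yield the required $L^2$ bound.
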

\begin{proof}
First note that applying Corollary \ref{I-app-r2} to the sequence
$\{\chi_j^n\}_{n}$ we have
\begin{equation}
\label{I-app4} \chi_j^n\stackrel{n}{\rightharpoonup} \chi_j\;\mbox{
in }\; W_{per}(Y)
\end{equation}
Next we have
\begin{eqnarray}
 \label{I-b}
 \displaystyle||g^n(x,\frac{x}{\epsilon})||^2_{H^1(\Omega)}&=&\int_{\Omega}
 (\chi_j^n(\frac{x}{\epsilon})Q_\epsilon(v_j))^2dx+
 \displaystyle\frac{1}{\epsilon^2}\int_{\Omega}(\nabla_y\chi_j^n(\frac{x}{\epsilon})Q_\epsilon(v_j))^2dx+\nonumber\\
&+&\displaystyle\int_{\Omega}(\chi_j^n(\frac{x}{\epsilon})\nabla_x
Q_\epsilon(v_j))^2dx
 \end{eqnarray}
 and
\begin{eqnarray}
 \label{I-a}
 \displaystyle||h^n(x,\frac{x}{\epsilon})||^2_{H^1(\Omega)}&=&\int_{\Omega}
 (\chi_j^n(\frac{x}{\epsilon})v_j)^2dx+
 \displaystyle\frac{1}{\epsilon^2}\int_{\Omega}(\nabla_y\chi_j^n(\frac{x}{\epsilon})v_j)^2dx+\nonumber\\
&+&\displaystyle\int_{\Omega}(\chi_j^n(\frac{x}{\epsilon})\nabla_x
v_j)^2dx
 \end{eqnarray}
For the first convergence in Theorem \ref{I-app-r3} we use that
\begin{equation}
\label{I-b'}
 ||\chi_j^n(\frac{x}{\epsilon})Q_\epsilon(v_j)||_{H^1(\Omega)}\leq
 C||\chi_j^n||_{W_{per}(Y)}
 \end{equation}
We can see that (\ref{I-b}) imply that
$$\displaystyle
||g^n(x,\frac{x}{\epsilon})-g(x,\frac{x}{\epsilon})||_{L^2(\Omega)}^2=
\displaystyle\int_{\Omega}\left(\chi_j^n(\frac{x}{\epsilon})-\chi_j(\frac{x}{\epsilon})\right)^2
(Q_\epsilon(v_j))^2dx$$
 and using (\ref{I-b'}) we obtain the desired result.

  \noindent For the second convergence result in Theorem \ref{I-app-r3} we
will recall now a very important inequality (see \cite{Lady}, Chp.
 2) to be used for our estimates. For any $p>N$ we have
\begin{equation}
 \label{I-app5}
\displaystyle||\phi||_{L^{\frac{2p}{p-2}}(\Omega)}\leq
c(p)(||\phi||_{L^2(\Omega)}+||\nabla\phi||^{\frac{N}{p}}_{L^2(\Omega)}||\phi||_{L^2(\Omega)}^{1-\frac{N}{p}})
\end{equation}
for any $\phi\in H^1(\Omega)$ and where $c(p)$ is a constant which
depends only on $q,N,\Omega$. Then, for $v\in [W^{1,p}(\Omega)]^N$
with $p>N$, using (\ref{I-app4}), the Sobolev embedding
$W^{1,p}(\Omega)\subset L^\infty(\Omega)$ and (\ref{I-app5}) in
(\ref{I-a}) we obtain
 $$\displaystyle||h^n(x,\frac{x}{\epsilon})||^2_{H^1(\Omega)}<C$$
 where the constant $C$ above does not depend on $n$.

 \noindent Next we can easily observe that
$$\displaystyle
||h^n(x,\frac{x}{\epsilon})-h(x,\frac{x}{\epsilon})||_{L^2(\Omega)}^2=
\displaystyle\int_{\Omega}\left(\chi_j^n(\frac{x}{\epsilon})-\chi_j(\frac{x}{\epsilon})\right)^2
(v_j)^2dx$$ and in either of the above cases, (\ref{I-app4}) and a
few simple manipulations imply that
$$h^n(x,\frac{x}{\epsilon})\stackrel{n}{\rightarrow}
h(x,\frac{x}{\epsilon})\;\mbox{ in }\;L^2(\Omega)$$ This together
with the bound on the sequence $\{h^n(x,\frac{x}{\epsilon})\}_n$
implies the statement of the Corollary. \end{proof} The two
convergence results in the next Corollary will follow immediately
from Proposition \ref{I-app-r2}.
\begin{cor}
\label{I-u1-w1} Let $\displaystyle
w_1^n(x,\frac{x}{\epsilon})=\chi_j^n(\frac{x}{\epsilon})\frac{\partial
u_0}{\partial x_j}$ and $\displaystyle
u_1^n(x,\frac{x}{\epsilon})=\chi_j^n(\frac{x}{\epsilon})Q_\epsilon(\frac{\partial
u_0}{\partial x_j})$. Then we have

1. If $u_0\in W^{3,p}(\Omega)$ for $p>N$,
$$w_1^n\stackrel{n}{\rightharpoonup}w_1\;\mbox{ in }\; H^1(\Omega)$$
2. If $u_0\in H^2(\Omega)$,
$$u_1^n\stackrel{n}{\rightharpoonup}u_1\;\mbox{ in }\; H^1(\Omega)$$
\end{cor}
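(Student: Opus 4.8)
The plan is to obtain both convergences as immediate specializations of Proposition~\ref{I-app-r3}, applied with the vector field $v=\nabla u_0$; the only work is to record that $\nabla u_0$ lies in the function space required by the relevant part of that proposition.

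For statement~1 I would take $v_j=\frac{\partial u_0}{\partial x_j}$. Since $u_0\in W^{3,p}(\Omega)$ with $p>N$, we have $v\in[W^{2,p}(\Omega)]^N\subset[W^{1,p}(\Omega)]^N$, which is exactly the hypothesis of part~2 of Proposition~\ref{I-app-r3}. With this choice the functions $h^n$ and $h$ of that proposition become, by the definition (\ref{I-first-cor}), precisely $w_1^n(x,\frac{x}{\epsilon})=\chi_j^n(\frac{x}{\epsilon})\frac{\partial u_0}{\partial x_j}$ and $w_1(x,\frac{x}{\epsilon})=\chi_j(\frac{x}{\epsilon})\frac{\partial u_0}{\partial x_j}$, so the conclusion $h^n\rightharpoonup h$ in $H^1(\Omega)$ reads $w_1^n\rightharpoonup w_1$ in $H^1(\Omega)$.

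For statement~2 I would again take $v=\nabla u_0$, now invoking only $u_0\in H^2(\Omega)$, hence $v\in[H^1(\Omega)]^N$, the hypothesis of part~1 of Proposition~\ref{I-app-r3}. With this $v$ the functions $g^n$ and $g$ of that proposition are exactly $u_1^n(x,\frac{x}{\epsilon})=\chi_j^n(\frac{x}{\epsilon})Q_\epsilon(\frac{\partial u_0}{\partial x_j})$ and $u_1(x,\frac{x}{\epsilon})=\chi_j(\frac{x}{\epsilon})Q_\epsilon(\frac{\partial u_0}{\partial x_j})$ in view of the definition given after (\ref{I-first-ord-bl-reg}), so $g^n\rightharpoonup g$ in $H^1(\Omega)$ yields $u_1^n\rightharpoonup u_1$ in $H^1(\Omega)$.

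There is no real obstacle, since everything has been prepared in Proposition~\ref{I-app-r3}, whose proof already contains the two nontrivial ingredients, namely the $W_{per}(Y)$-convergence $\chi_j^n\rightharpoonup\chi_j$ of Corollary~\ref{I-app-r2} and, for the case needing $W^{1,p}$ data, the uniform $H^1$-bound obtained through the interpolation inequality (\ref{I-app5}) together with the Sobolev embedding $W^{1,p}(\Omega)\subset L^\infty(\Omega)$. The only mild point of attention is the regularity bookkeeping: one must note that $u_0\in W^{3,p}(\Omega)$ is more than enough to place $\nabla u_0$ in $[W^{1,p}(\Omega)]^N$ for the first statement, whereas for the second statement $u_0\in H^2(\Omega)$ suffices because $Q_\epsilon$ is applied before differentiation and only $H^1$-type control of $\nabla u_0$ enters the bound (\ref{I-b'}).
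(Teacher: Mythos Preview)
Your proposal is correct and matches the paper's intended argument: the corollary is obtained by specializing Proposition~\ref{I-app-r3} to $v=\nabla u_0$, with the regularity assumptions on $u_0$ placing $\nabla u_0$ in the appropriate space for each part. (The paper's one-line justification cites ``Proposition~\ref{I-app-r2}'', which appears to be a typo for Proposition~\ref{I-app-r3}; your reading is the correct one.)
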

\begin{cor}
\label{I-u1-w1-1cor} Let $\theta_\epsilon^n$ be the solution of
\begin{equation}
\label{I-w1-1cor-n} \displaystyle
-\nabla\cdot(A^n(\frac{x}{\epsilon})\nabla \theta_\epsilon^n)=
0\;\mbox{ in
}\Omega\;,\;\theta_\epsilon^n=w_1^n(x,\frac{x}{\epsilon})\;\mbox{ on
}\partial\Omega
\end{equation}
and $\beta_\epsilon^n$ be the solution of
\begin{equation}
\label{I-u1-1cor-n} \displaystyle
-\nabla\cdot(A^n(\frac{x}{\epsilon})\nabla \beta_\epsilon^n)=
0\;\mbox{ in
}\Omega\;,\;\beta_\epsilon^n=u_1^n(x,\frac{x}{\epsilon})\;\mbox{ on
}\partial\Omega
\end{equation}
We have that

(i) if $u_0\in W^{3,p}(\Omega)$, $p>N$, then
$$\theta_\epsilon^n\stackrel{n}{\rightharpoonup}
\theta_\epsilon\;\mbox{ in }\;H^1(\Omega)$$ \;\;\;\;(ii) if
$u_0\in H^2(\Omega)$, then
$$\beta_\epsilon^n\stackrel{n}{\rightharpoonup}
\beta_\epsilon\;\mbox{ in }\;H^1(\Omega)$$ where $\theta_\epsilon$
and $\beta_\epsilon$ satisfies
\begin{equation}
\label{I-w1-1cor} \displaystyle
-\nabla\cdot(A(\frac{x}{\epsilon})\nabla \theta_\epsilon)= 0\;\mbox{
in }\Omega\;,\;\theta_\epsilon=w_1(x,\frac{x}{\epsilon})\;\mbox{ on
}\partial\Omega
\end{equation}
and
\begin{equation}
\label{I-u1-1cor} \displaystyle
-\nabla\cdot(A(\frac{x}{\epsilon})\nabla \beta_\epsilon)= 0\;\mbox{
in }\Omega\;,\;\beta_\epsilon=u_1(x,\frac{x}{\epsilon})\;\mbox{ on
}\partial\Omega
\end{equation}
\end{cor}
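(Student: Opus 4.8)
The plan is to reduce to the homogeneous Dirichlet situation covered by Lemma~\ref{I-app-l1} by subtracting off the boundary lifts, whose convergence is already known from Corollary~\ref{I-u1-w1}, and then to quote Lemma~\ref{I-app-l1}. I describe case (i) in detail; case (ii) follows verbatim with $u_1^n,u_1$ in place of $w_1^n,w_1$, the only change being that one uses Corollary~\ref{I-u1-w1}(2), valid already for $u_0\in H^2(\Omega)$, together with the limit problem (\ref{I-u1-1cor}). So, assume $u_0\in W^{3,p}(\Omega)$ with $p>N$ and set $\zeta_\epsilon^n=\theta_\epsilon^n-w_1^n(\cdot,\frac{\cdot}{\epsilon})\in H_0^1(\Omega)$ (recall $w_1^n\in H^1(\Omega)$ under this hypothesis, so this difference has zero trace). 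Then $\zeta_\epsilon^n$ is the weak solution in $H_0^1(\Omega)$ of
\[
\int_\Omega A^n(\frac{x}{\epsilon})\nabla\zeta_\epsilon^n\cdot\nabla\psi\,dx=-\int_\Omega A^n(\frac{x}{\epsilon})\nabla w_1^n(\cdot,\frac{\cdot}{\epsilon})\cdot\nabla\psi\,dx\;\;\mbox{ for all }\;\psi\in H_0^1(\Omega),
\]
i.e.\ a Dirichlet problem with right-hand side $f_\epsilon^n:=-\nabla\cdot(A^n(\frac{\cdot}{\epsilon})\nabla w_1^n(\cdot,\frac{\cdot}{\epsilon}))\in H^{-1}(\Omega)$ and coefficient matrices $b^n:=A^n(\frac{\cdot}{\epsilon})$. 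These $b^n$ are uniformly elliptic (the inequality stated right after (\ref{I-app2})) and, for $\epsilon$ fixed, converge in $L^2(\Omega)$ (indeed in every $L^q(\Omega)$, $q<\infty$) to $b:=A(\frac{\cdot}{\epsilon})$, since $\|A^n(\frac{\cdot}{\epsilon})-A(\frac{\cdot}{\epsilon})\|_{L^q(\Omega)}\le C_{\epsilon,q}\|A^n-A\|_{L^q(Y)}\to0$ by (\ref{I-app2}).

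The one step requiring genuine care is the weak convergence of the data: I claim $f_\epsilon^n\rightharpoonup f_\epsilon:=-\nabla\cdot(A(\frac{\cdot}{\epsilon})\nabla w_1)$ in $H^{-1}(\Omega)$. By Corollary~\ref{I-u1-w1}(1), $w_1^n\rightharpoonup w_1$ in $H^1(\Omega)$, hence $\nabla w_1^n\rightharpoonup\nabla w_1$ in $[L^2(\Omega)]^N$ with $\|\nabla w_1^n\|_{L^2(\Omega)}$ bounded. Combining this with the uniform $L^\infty$ bound on $A^n$ and the strong $L^q(\Omega)$ convergence of $A^n(\frac{\cdot}{\epsilon})$ ($q<\infty$), a standard strong-times-weak argument yields $A^n(\frac{\cdot}{\epsilon})\nabla w_1^n\rightharpoonup A(\frac{\cdot}{\epsilon})\nabla w_1$ in $[L^2(\Omega)]^N$: for $\psi\in H_0^1(\Omega)$ with $\nabla\psi\in[L^4(\Omega)]^N$ one bounds $|\int_\Omega(A^n(\frac{x}{\epsilon})-A(\frac{x}{\epsilon}))\nabla w_1^n\cdot\nabla\psi\,dx|\le\|A^n(\frac{\cdot}{\epsilon})-A(\frac{\cdot}{\epsilon})\|_{L^4(\Omega)}\|\nabla w_1^n\|_{L^2(\Omega)}\|\nabla\psi\|_{L^4(\Omega)}\to0$, passes to general $\psi\in H_0^1(\Omega)$ by density (uniformly in $n$, using the $L^\infty$ bound on $A^n$), and uses that $\int_\Omega A(\frac{x}{\epsilon})\nabla w_1^n\cdot\nabla\psi\,dx\to\int_\Omega A(\frac{x}{\epsilon})\nabla w_1\cdot\nabla\psi\,dx$ because $A(\frac{\cdot}{\epsilon})\nabla\psi\in[L^2(\Omega)]^N$. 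Pairing $f_\epsilon^n$ with $\psi\in H_0^1(\Omega)$ and letting $n\to\infty$ then gives $f_\epsilon^n\rightharpoonup f_\epsilon$ in $H^{-1}(\Omega)$. This strong-times-weak passage is the main (and essentially the only) obstacle; everything else is soft.

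With all the hypotheses verified, Lemma~\ref{I-app-l1} applies and gives $\zeta_\epsilon^n\rightharpoonup\zeta_\epsilon$ in $H_0^1(\Omega)$, where $\zeta_\epsilon$ is the weak solution of $\int_\Omega A(\frac{x}{\epsilon})\nabla\zeta_\epsilon\cdot\nabla\psi\,dx=-\int_\Omega A(\frac{x}{\epsilon})\nabla w_1\cdot\nabla\psi\,dx$ for all $\psi\in H_0^1(\Omega)$; equivalently, $\theta_\epsilon:=\zeta_\epsilon+w_1$ is the weak solution of (\ref{I-w1-1cor}). Adding back the boundary lift and using $w_1^n\rightharpoonup w_1$ in $H^1(\Omega)$ once more, $\theta_\epsilon^n=\zeta_\epsilon^n+w_1^n\rightharpoonup\zeta_\epsilon+w_1=\theta_\epsilon$ in $H^1(\Omega)$, which is (i); since the solution of (\ref{I-w1-1cor}) is unique, the convergence holds for the full sequence, not just a subsequence. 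The identical argument with $u_1^n,u_1$, Corollary~\ref{I-u1-w1}(2), and the limit problem (\ref{I-u1-1cor}) proves (ii), which requires only $u_0\in H^2(\Omega)$.
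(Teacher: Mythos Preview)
Your proof is correct and follows essentially the same approach as the paper: subtract the boundary lift to obtain a homogeneous Dirichlet problem (what the paper calls ``homogenizing the data''), use Corollary~\ref{I-u1-w1} to control the lifts, verify $H^{-1}$ convergence of the resulting right-hand sides, and apply Lemma~\ref{I-app-l1}. The paper leaves the strong-times-weak step for $A^n(\frac{\cdot}{\epsilon})\nabla w_1^n\rightharpoonup A(\frac{\cdot}{\epsilon})\nabla w_1$ as ``a few simple arguments,'' which you have spelled out carefully.
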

\begin{proof}
Using Corollary \ref{I-u1-w1} and a few simple arguments one can
simply show that
        $$\displaystyle -\nabla\cdot(A(\frac{x}{\epsilon})\nabla
w_1^n(x,\frac{x}{\epsilon}))
        \stackrel{n}{\rightharpoonup}-\nabla\cdot(A(\frac{x}{\epsilon})\nabla
        w_1(x,\frac{x}{\epsilon}))\;\mbox{ in }\;H^{-1}(\Omega)$$
        and
       $$\displaystyle -\nabla\cdot(A(\frac{x}{\epsilon})\nabla
w_1^n(x,\frac{x}{\epsilon}))
        \stackrel{n}{\rightharpoonup}-\nabla\cdot(A(\frac{x}{\epsilon})\nabla
        w_1(x,\frac{x}{\epsilon}))\;\mbox{ in }\;H^{-1}(\Omega)$$
Homogenizing the data in the problems (\ref{I-w1-1cor-n}) and
(\ref{I-u1-1cor-n}) and using Corollary \ref{I-u1-w1} and Lemma
\ref{I-app-l1} the statement follows immediately.\end{proof}
\begin{cor}
\label{I-app-r4} For any $i,j\in\{1,..,N\}$ let $\chi_{ij}^n\in
W_{per}(Y)$ be the solutions of:
\begin{equation}
\label{I-app6} \nabla_y\cdot(A^n\nabla_y
\chi_{ij}^n)=b^n_{ij}-M_Y(b^n_{ij})
\end{equation}
where
$$\displaystyle b^n_{ij}=-A^n_{ij}-A^n_{ik}\frac{\partial\chi_j^n}{\partial
y_k}-\frac{\partial}{\partial y_k}(A^n_{ik}\chi_j^n)$$ and $M_Y(.)$
is the average on $Y$.

 \noindent Then we have
$$\chi_{ij}^n\rightharpoonup \chi_{ij}\;\mbox{ in }\; W_{per}(Y)\;\mbox{ for any }\;i,j\in\{1,..,N\}$$
where $\chi_{ij}$ satisfies
\begin{equation}
\label{I-app7} \displaystyle\int_Y A(y)\nabla_y\chi_{ij}\nabla_y\psi
dy=(b_{ij}-M_Y(b_{ij}),\psi)_{((W_{per}(Y))', W_{per}(Y))}
\end{equation}
 for any $\psi\in W_{per}(Y)$ and with
$$b_{ij}=-A_{ij}-A_{ik}\frac{\partial\chi_j}{\partial
y_k}-\frac{\partial}{\partial y_k}(A_{ik}\chi_j).$$
\end{cor}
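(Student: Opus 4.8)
The plan is to mimic the strategy already used for Corollary \ref{I-app-r2}: first produce a uniform bound on $\chi_{ij}^n$ in $W_{per}(Y)$, then pass to the limit in the weak formulation of (\ref{I-app6}) by means of the periodic version of Lemma \ref{I-app-l1} recorded in Remark \ref{I-app-r0}. Writing the weak formulation of (\ref{I-app6}) in the form of (\ref{I-app7}) with $A,\chi_{ij},b_{ij}$ replaced by $A^n,\chi_{ij}^n,b_{ij}^n$,
$$\int_Y A^n\nabla_y\chi_{ij}^n\nabla_y\psi\,dy=(b_{ij}^n-M_Y(b_{ij}^n),\psi)_{((W_{per}(Y))',W_{per}(Y))}\quad\text{for all }\psi\in W_{per}(Y),$$
testing with $\psi=\chi_{ij}^n$, and using the uniform coercivity $c|\xi|^2\leq A^n_{ij}(y)\xi_i\xi_j$ coming from (\ref{I-app2}) together with the Poincare-Wirtinger inequality on $W_{per}(Y)$, it is enough to bound $b_{ij}^n-M_Y(b_{ij}^n)$ in $(W_{per}(Y))'$ uniformly in $n$. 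This follows from $|A^n|_{L^\infty}\leq|A|_{L^\infty}$ and the uniform bound $|\nabla_y\chi_j^n|_{L^2(Y)}\leq C$ provided by Corollary \ref{I-app-r2}: the contributions $-A_{ij}^n$ and $-A_{ik}^n\partial\chi_j^n/\partial y_k$ are bounded in $L^2(Y)$, while $-\partial(A_{ik}^n\chi_j^n)/\partial y_k$, being a distributional derivative of the $L^2(Y)$-bounded function $A_{ik}^n\chi_j^n$, is bounded in $(W_{per}(Y))'$. Hence there exist a subsequence and $\chi_{ij}\in W_{per}(Y)$ with $\chi_{ij}^n\rightharpoonup\chi_{ij}$ in $W_{per}(Y)$.

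The next step is to identify the limit of $b_{ij}^n-M_Y(b_{ij}^n)$ in $(W_{per}(Y))'$. From (\ref{I-app2}) one has $A^n\to A$ in every $L^p(Y)$, $p<\infty$, while $\chi_j^n\rightharpoonup\chi_j$ in $W_{per}(Y)$ yields $\chi_j^n\to\chi_j$ strongly in $L^2(Y)$ by the compact Sobolev embedding. Using the uniform $L^\infty$ bound on $A^n$ one gets $A_{ik}^n\chi_j^n\to A_{ik}\chi_j$ strongly in $L^2(Y)$, hence $\partial(A_{ik}^n\chi_j^n)/\partial y_k\to\partial(A_{ik}\chi_j)/\partial y_k$ in $(W_{per}(Y))'$; similarly $A_{ik}^n\partial\chi_j^n/\partial y_k\rightharpoonup A_{ik}\partial\chi_j/\partial y_k$ weakly in $L^2(Y)$. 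Taking averages, $M_Y(\partial(A_{ik}^n\chi_j^n)/\partial y_k)=0$ by periodicity, and $M_Y(-A_{ij}^n-A_{ik}^n\partial\chi_j^n/\partial y_k)=-({\cal{A}}^{hom}_n)_{ij}\to-{\cal{A}}^{hom}_{ij}$ by (\ref{I-aa}); altogether $b_{ij}^n-M_Y(b_{ij}^n)\rightharpoonup b_{ij}-M_Y(b_{ij})$ in $(W_{per}(Y))'$. Together with $A^n\to A$ in $L^2(Y)$, the periodic counterpart of Lemma \ref{I-app-l1} (Remark \ref{I-app-r0}), applied with $b^n=A^n$, $b=A$, $f_n=b_{ij}^n-M_Y(b_{ij}^n)$ and $f=b_{ij}-M_Y(b_{ij})$, shows that $\chi_{ij}$ solves (\ref{I-app7}). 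Since (\ref{I-app7}) has a unique solution in $W_{per}(Y)$ by Lax-Milgram, the convergence holds for the whole sequence, and the argument applies verbatim to each pair $(i,j)\in\{1,\dots,N\}^2$.

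The main obstacle I expect is the passage to the limit in the products $A_{ik}^n\chi_j^n$ and $A_{ik}^n\partial\chi_j^n/\partial y_k$: one has only strong $L^2$ (respectively weak $L^2$) convergence of the second factor and merely $L^p$ convergence of $A^n$, so identifying the limits requires the uniform $L^\infty$ bound on $A^n$ together with a truncation argument showing $(A^n-A)\chi_j^n\to0$ strongly and $(A^n-A)\partial\chi_j^n/\partial y_k\rightharpoonup0$ weakly in $L^2(Y)$. Once these convergences are in place, the rest is a routine adaptation of the arguments already used for Corollaries \ref{I-app-r1}--\ref{I-u1-w1-1cor}.
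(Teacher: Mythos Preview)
Your proposal is correct and follows essentially the same route as the paper: show that $b_{ij}^n-M_Y(b_{ij}^n)\rightharpoonup b_{ij}-M_Y(b_{ij})$ in $(W_{per}(Y))'$ by handling the three pieces of $b_{ij}^n$ via (\ref{I-app2}) and Corollary~\ref{I-app-r2}, then invoke the periodic version of Lemma~\ref{I-app-l1} (Remark~\ref{I-app-r0}). You are in fact slightly more explicit than the paper in two places: you spell out the a~priori bound on $\chi_{ij}^n$ (which the paper leaves implicit in the application of Lemma~\ref{I-app-l1}), and you correctly identify the delicate point---passing to the limit in the products $A_{ik}^n\chi_j^n$ and $A_{ik}^n\partial\chi_j^n/\partial y_k$---which the paper dispatches with the phrase ``simple manipulations'' and which your dominated-convergence/truncation sketch handles cleanly.
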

\begin{proof}
 For any $\psi\in W_{per}(Y)$, we have that,
\begin{eqnarray}
\label{I-imp}
 \displaystyle\int_{Y}(b_{ij}^n-M_Y(b_{ij}^n)\psi dy&=&
\displaystyle\int_{Y}(-A^n_{ij}-A^n_{ik}\frac{\partial\chi_j^n}{\partial
y_k})\psi dy+ ({\cal{A}}^{hom}_n)_{ij}\displaystyle\int_Y\psi
dy+\nonumber\\
&+&\displaystyle\int_{Y}A^n_{ki}\chi_j^n\frac{\partial\psi}{\partial
y_k}dy
\end{eqnarray}
 where we have used that $M_Y(b_{ij}^n)=-({\cal{A}}^{hom}_n)_{ij}$ (see
\cite{MV}).

 \noindent Using (\ref{I-app2}), (\ref{I-app4}), and simple manipulations we
can prove that
\begin{equation}
\label{I-aaa} A_{ik}^n\frac{\partial\chi_j^n}{\partial
y_k}\rightharpoonup A_{ik}\frac{\partial\chi_j}{\partial
y_k}\;\mbox{ in }\;L^2(Y)
\end{equation}
and
\begin{equation}
\label{I-aaaa} A_{ik}^n\chi_j^n\rightharpoonup A_{ik}\chi_j\;\mbox{
in }\;L^2(Y)
\end{equation}
From (\ref{I-aaa}), (\ref{I-app2}) and (\ref{I-aa})  we have that
\begin{equation}
\label{I-aaaaa} ({\cal{A}}^{hom}_n)_{ij}\rightarrow
{\cal{A}}^{hom}_{ij}
\end{equation}
Finally using (\ref{I-app2}), (\ref{I-app4}), (\ref{I-aaa}) and
 (\ref{I-aaaa}) in (\ref{I-imp}) we obtain that
 $$b_{ij}^n-M_Y(b_{ij}^n)\rightharpoonup b_{ij}-M_Y(b_{ij})\;\mbox{ in
 }\; (W_{per}(Y))'$$
 This and Remark \ref{I-app-r0} complete the proof of the statement.
\end{proof}
\begin{rem}
 \label{I-rem-simpla}
 We can easily observe that we have
$$\displaystyle A_{ij}^n\chi_{ij}^n\stackrel{n}{\rightharpoonup}A_{ij}\chi_{ij},
 \;A_{ij}^n\frac{\partial\chi_{ij}^n}{\partial y_k}\stackrel{n}{\rightharpoonup}A_{ij}\frac{\partial\chi_{ij}}{\partial
 y_k}\;\mbox{ weakly in } W_{per}(Y)$$
 \end{rem}

\end{document}